\newtheorem{lem}{Lemma}[section]
\newtheorem{prop}[lem]{Proposition}
\newtheorem{cor}[lem]{Corollary}
\newtheorem{thm}[lem]{Theorem}
\theoremstyle{definition}
\newtheorem{defin}[lem]{Definition}
\newtheorem{remark}[lem]{Remark}
\newtheorem{example}[lem]{Example}
\newtheorem{question}[lem]{Question}
\newtheorem*{motivating question}{Motivating question}
\DeclareMathOperator{\rad}{rad}
\DeclareMathOperator{\grad}{grad}
\DeclareMathOperator{\gr}{gr}
\DeclareMathOperator{\topp}{top}
\DeclareMathOperator{\Hom}{Hom}
\DeclareMathOperator{\Ext}{Ext}
\DeclareMathOperator{\Spec}{Spec}
\renewcommand{\grad}{\operatorname{grad}}
\begin{document}

\begin{abstract}
We investigate compatibility of gradings for an almost Koszul or Koszul algebra $R$ that is also the higher preprojective algebra $\Pi_{n+1}(A)$ of an $n$-hereditary algebra $A$. 
For an $n$-representation finite algebra $A$, we show that $A$ must be Koszul if $\Pi_{n+1}(A)$ can be endowed with an almost Koszul grading.
For an acyclic basic $n$-representation infinite algebra $A$, we show that $A$ must be Koszul if $\Pi_{n+1}(A)$ can be endowed with a Koszul grading.
From this we deduce that a higher preprojective grading of an (almost) Koszul algebra $R = \Pi_{n+1}(A)$ is, in both cases, isomorphic to a cut of the (almost) Koszul grading.
Up to a further assumption on the tops of the degree $0$ subalgebras for the different gradings, we also show a similar result without the basic assumption in the $n$-representation infinite case.
As an application, we show that $n$-APR tilting preserves the property of being Koszul for $n$-representation infinite algebras. 
\end{abstract}

\title[\resizebox{4.5in}{!}{On compatibility of Koszul- and higher preprojective gradings}]{On compatibility of Koszul- and higher preprojective gradings}
\date{\today}

\author[Darius Dramburg]{Darius Dramburg}
\address{\emph{Corresponding author:} Darius Dramburg, Department of Mathematics, Uppsala University, Box 480, 751 06 Uppsala, Sweden}
\email{darius.dramburg@math.uu.se}

\author[Mads Hustad Sandøy]{Mads Hustad Sandøy}
\address{Mads Hustad Sandøy, Department of Mathematical Sciences, NTNU, NO-7491 Trondheim, Norway}
\email{mads.sandoy@ntnu.no}
\subjclass{16E65, 16G70, 16S37, 16W50}

\maketitle

\section{Introduction}
Preprojective algebras of quivers play an important role in representation theory. They were initially constructed by Gelfand and Ponomarev \cite{GP79}, and given a combinatorial description by Dlab and Ringel \cite{DR80}. For a quiver $Q$, the preprojective algebra is the path algebra $k\overline{Q}$ of the doubled quiver $\overline{Q}$, with relations generated by commutators of arrows $\alpha \in Q_1$ and their reverse arrows $\alpha^\ast$, i.e.\ we have 
\[ \Pi(Q) = \Pi(kQ) = k\overline{Q}/\left( \sum_{\alpha \in Q_1} \alpha \alpha^\ast - \alpha^\ast \alpha \right). \]
A purely homological description was given by Baer, Geigle and Lenzing \cite{BGL87}, which has the feature that the preprojective algebra becomes a graded algebra. In detail, for a quiver $Q$, one may construct the preprojective algebra as the tensor algebra of the $kQ$-bimodule of extensions of injectives by projectives, i.e.\ 
\[ \Pi(Q) = \Pi(kQ) = \operatorname{T}_{kQ} \Ext^1_{kQ}(D(kQ), kQ).  \]
In this way, $\Pi(Q)$ inherits a grading from tensor degrees, and one recovers $kQ$ as the degree $0$ part of this grading. Furthermore, a change of orientation of $Q$ amounts to a change of the grading on $\Pi(Q)$, so while $\Pi(Q)$ as an ungraded algebra does not retain this information, the additional structure does. 
It is this perspective on the preprojective algebra that was generalised by Iyama and Oppermann \cite{IO13} in Iyama's higher Auslander-Reiten theory. Here, for a finite dimensional algebra $A$ of global dimension $n$, the $(n+1)$-preprojective algebra is defined as 
\[ \Pi_{n+1}(A) = \operatorname{T}_A \Ext^n_A(D(A), A). \]
In the same way as for the classical case $n=1$, the $(n+1)$-preprojective algebra inherits a grading from tensor degrees, and both cases share many similarities. Dlab-Ringel's combinatorial construction of the preprojective algebra was generalised by Thibault \cite{Thi20}, and by Grant and Iyama \cite{GI19} to the setting of higher Auslander-Reiten theory. However, this combinatorial description of the higher preprojective algebra hinges on the assumption that $A$ is Koszul. Using this information, for a basic $n$-hereditary Koszul algebra $A=kQ/I$ one forms a new quiver $\overline{Q}$ by adding new arrows to $Q$ corresponding to a basis of $K_n$, the last term in the Koszul resolution of $A$. Similarly, the relations for $\overline{Q}$ are obtained from the Koszul resolution of $A$. This construction also keeps track of the higher preprojective grading on $\Pi_{n+1}(A)$ since the grading is completely determined by the newly added arrows, which have preprojective degree $1$. The analog of changing the orientation of $Q$ via reflection functors when $n=1$ is called $n$-Auslander-Platzeck-Reiten tilting ($n$-APR tilting for short), and as before this amounts to a change of preprojective grading on $\overline{Q}$. 
The interplay of Koszul and preprojective structures go further. If the Koszul algebra $A$ is $n$-representation infinite, then $\Pi_{n+1}(A)$ can be endowed with a Koszul grading, in addition to its preprojective grading. Furthermore, in the basic case, this Koszul grading is simply given by path-length in $k\overline{Q}$, and the preprojective grading by a grading on $\overline{Q}$, so the two gradings form a $\mathbb{Z}^2$-grading. If $A$ is instead $n$-representation finite and Koszul, the algebra $\Pi_{n+1}(A)$ instead becomes \emph{almost} Koszul, and similarly the two gradings form a $\mathbb{Z}^2$-grading. 

It is this interplay between the two gradings that we want to investigate in this article, both for the $n$-representation finite and the $n$-representation infinite case. The motivating question, which is made precise in \Cref{Ques: Compatibility?}, asks whether a given higher preprojective grading and a Koszul grading on some algebra $R$ need to be compatible in the way described above. We begin by explaining that this is not literally true and give some easy examples. However, the examples suggest that for two ``incompatible'' gradings we can find an automorphism that moves the higher preprojective grading to one that is compatible with the given Koszul grading. We give a proof of this statement in the $n$-representation finite case in \Cref{Thm: Main thm for n-RF algebras}. In the $n$-representation infinite case, we prove an analogous statement for basic acyclic algebras in \Cref{Thm: Main theorem for basic nRI}, and a non-basic version for acyclic algebras over a fixed semisimple base ring in \Cref{Thm: Main theorem for nonbasic nRI}. The proof is based on the fact that if $A$ is ring-indecomposable and $n$-hereditary, and $\Pi_{n+1}(A)$ can be endowed with some (almost) Koszul grading, then $A$ can be endowed with a Koszul grading. In this way, our results can be seen as a converse to Grant-Iyama's work, showing that $A$ can be given a Koszul grading if and only if $\Pi_{n+1}(A)$ can be given an (almost) Koszul grading. Additionally, this answers a question raised in \cite[Question 3.13]{DramburgGasanovaI} in the basic, $n$-representation tame case, which also applies to \cite{DramburgGasanovaII}. In the $n$-representation tame case, our results have a clear geometric interpretation. The compatibility of gradings becomes equivalent to the commutation of two $k^\ast$-actions on the affine variety $\Spec(Z(\Pi_{n+1}(A)))$, as outlined in \Cref{Rem: Geometric interpretation}.
Our results also complement the work in \cite{Thi20}, where Thibault shows that certain Koszul algebras can not arise as higher preprojective algebras. Similarly, this shows that in order to construct preprojective gradings on Koszul quiver algebras, one can exhaust all gradings up to isomorphism by grading the quiver, as has been done in \cite{HIO14, DramburgGasanovaI, DramburgGasanovaII, Giovannini}.  
As an application, we show that $n$-APR tilting preserves Koszulity. 

\section{Setup}
We work over an algebraically closed field $k$ of characteristic zero, and we denote by $D({-}) = \Hom_k({-}, k)$ the usual $k$-duality. 

\subsection{The Wedderburn-Malcev theorem}
We will use the Wedderburn-Malcev theorem in the following way. We denote by $\rad(A)$ the Jacobson radical of the algebra $A$. 

\begin{thm}\cite{malcev1942representation} \label{Thm: Wedderburn-Malcev}
    Let $A$ be a finite dimensional algebra such that $A/\rad(A)$ is separable. Then there exists a semisimple subalgebra $S \subseteq A$ such that $A = S \oplus \rad(A)$. Furthermore, for any semisimple subalgebra $T \subseteq A$, there exists $r \in \rad(A)$ such that $(1+r)^{-1} T (1+r) \subseteq S$. 
\end{thm}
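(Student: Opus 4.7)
The plan is to prove both parts by induction on the nilpotency index $N$ of $J := \rad(A)$, reducing everything to the base case $J^2 = 0$, where the separability of $A/J$ unlocks the needed cohomological vanishing.

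\textbf{Base case} ($J^2 = 0$). Let $B := A/J$ and $\pi : A \to B$ the projection. Separability of $B$ gives $H^i(B, M) = 0$ for all $i \geq 1$ and all $B$-bimodules $M$. For existence of $S$, choose any $k$-linear section $s : B \to A$ of $\pi$ and define $f(x, y) := s(xy) - s(x)s(y)$. Then $f$ lands in $J$ and is a Hochschild $2$-cocycle for $B$ with values in $J$, where $J$ is a $B$-bimodule via $s$ (well-defined since $J^2 = 0$ kills any ambiguity). By $H^2(B, J) = 0$, we have $f = \delta g$ for some $g : B \to J$, so $s + g$ is an algebra section whose image is the desired $S$. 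For conjugacy, if $T \subseteq A$ is semisimple then $T \cap J = 0$ (as it is a nilpotent ideal of $T$), so $\pi|_T$ is injective; set $S' := (\pi|_S)^{-1}(\pi(T)) \subseteq S$, and view $T, S'$ as two algebra sections $\phi_T, \phi_S : \pi(T) \to A$ of $\pi|_{\pi(T)}$. The difference $h := \phi_T - \phi_S$ takes values in $J$ and, the quadratic correction vanishing modulo $J^2$, is a derivation $\pi(T) \to J$. By $H^1(\pi(T), J) = 0$ (using that $\pi(T)$ is a direct factor of the separable $B$), $h$ is inner: $h(x) = \phi_S(x)r - r\phi_S(x)$ for some $r \in J$. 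Using $(1+r)^{-1} = 1 - r \pmod{J^2}$, a direct computation gives $(1 - r)\phi_S(x)(1 + r) = \phi_T(x)$, and replacing $r$ by $-r$ yields $(1+r)^{-1} T (1+r) \subseteq S$.

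\textbf{Inductive step} ($N \geq 3$). Pass to $A' := A/J^{N-1}$, whose radical $J/J^{N-1}$ has nilpotency index at most $N - 1$. Induction gives a semisimple splitting $\overline{S} \subseteq A'$. Let $B$ be its preimage in $A$; this is a subalgebra with $\rad(B) = J^{N-1}$ satisfying $(J^{N-1})^2 \subseteq J^{2N-2} = 0$, so the base case applied to $B$ yields a semisimple $S \subseteq B$. A short check gives $A = B + J$ with $B \cap J = J^{N-1}$, whence $A = S \oplus J$. For conjugacy, apply induction inside $A'$ to $\pi(T)$ and $\overline{S}$ to find $r_1 \in J$ with $(1 + r_1)^{-1} T (1 + r_1) \subseteq B$, then apply the base case inside $B$ to find $r_2 \in J^{N-1} \subseteq J$ moving the result further into $S$. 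The product $(1 + r_1)(1 + r_2) = 1 + r$ with $r = r_1 + r_2 + r_1 r_2 \in J$ is the required element.

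The main obstacle is the cohomological input in the base case: the vanishing of $H^1$ and $H^2$ for separable $B$ is essential, and without it neither a splitting of $A \twoheadrightarrow A/J$ as algebras nor uniqueness of semisimple complements up to inner automorphism need hold. In our setting the hypothesis comes for free: $k$ is algebraically closed of characteristic zero, so $A/\rad(A)$ is a product of matrix algebras and hence automatically separable.
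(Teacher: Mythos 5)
The paper does not prove this statement at all: it is quoted as a classical result and attributed to Malcev, so there is no in-text argument to compare against. Your proposal supplies the standard textbook proof (as in Pierce or Curtis--Reiner): reduce by induction on the nilpotency index of $J=\rad(A)$ to the square-zero case, where existence of the complement is the vanishing of a Hochschild $2$-cocycle class and conjugacy is the innerness of a derivation, both guaranteed by separability. The induction step (passing to $A/J^{N-1}$, pulling back the complement to a subalgebra $B$ with $\rad(B)^2=0$, and composing the two conjugating units) is carried out correctly, and the bookkeeping $A=B+J$, $B\cap J=J^{N-1}$ is right. This is a legitimate, complete argument for the cited theorem.

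One justification is misstated, though the conclusion you need is still true. You assert that $H^1(\pi(T),J)=0$ ``using that $\pi(T)$ is a direct factor of the separable $B$.'' A semisimple subalgebra of a semisimple algebra need not be a direct factor (consider the diagonal $k\hookrightarrow k\times k$, or the scalars in $M_2(k)$), so this reason does not stand. The correct reason is that $\pi(T)$ is itself a \emph{separable} algebra --- automatic here because every finite-dimensional semisimple algebra over an algebraically closed field of characteristic zero is separable, as you note in your closing remark --- and separability of $\pi(T)$ alone gives $H^1(\pi(T),M)=0$ for every $\pi(T)$-bimodule $M$. (This is also why the theorem, in the generality stated with only $A/\rad(A)$ separable, really needs $T$ separable rather than merely semisimple over an arbitrary field; in the paper's setting the distinction disappears.) With that substitution the proof is sound.
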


A decomposition $A = S \oplus \rad(A)$ as in the theorem is called a \emph{Wedderburn-Malcev decomposition}.

\subsection{Graded algebras}
Let $R$ be a $k$-algebra. We denote by $R_\bullet = \bigoplus_{i \geq 0} R_i$ a \emph{nonnegative} $\mathbb{Z}$-grading. We write $R_+ = \bigoplus_{ i > 0} R_i$ for the positive degree part of $R_\bullet$, and more generally $R_{\geq n} = \bigoplus_{i \geq n} R_i$ for the part of degree at least $n$. The following definition and properties can be found in \cite{Nas-VanOystaeyen}.

\begin{defin}
    The \emph{graded radical} of $R_\bullet$ is the intersection of all annihilators of graded simple $R_\bullet$-modules, and denoted by 
    \[ \grad(R_\bullet) = \bigcap_{ S \text{ graded simple} } \operatorname{Ann}_{R}(S)   \]
\end{defin}

\begin{prop}\cite[Corollary 2.9.3, Proposition 5.2.4, Proposition 9.6.4]{Nas-VanOystaeyen} \label{Pro: Properties of gradings}
    Let $R_\bullet$ be nonnegatively graded.
    \begin{enumerate}
        \item We have $\grad(R_\bullet) \cap R_0 = \rad(R_0)$. 
        \item We have $\grad(R_\bullet) = \rad(R_0) \oplus R_+$. 
        \item The graded simple modules up to shift and graded isomorphism coincide with the simple $R_\bullet/\grad(R_\bullet) \simeq R_0/\rad(R_0)$ modules. 
        \item If $R_\bullet$ is finite dimensional, then $\grad(R_\bullet) = \rad(R)$. 
    \end{enumerate}
\end{prop}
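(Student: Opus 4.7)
The plan is to organize the proof around the observation that every graded simple $R_\bullet$-module is concentrated in a single degree, since each homogeneous component is automatically a graded submodule. Once this is established, the rest follows by a careful bookkeeping of annihilators and a standard nilpotency argument.

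First I would show that if $S$ is a graded simple $R_\bullet$-module, then $S = S_i$ for some $i \in \mathbb{Z}$: the decomposition $S = \bigoplus_j S_j$ expresses $S$ as a direct sum of graded submodules, so simplicity forces all but one summand to vanish. Since $R_+ \cdot S_i \subseteq \bigoplus_{j>i} S_j = 0$, this shows $R_+$ annihilates every graded simple, hence $R_+ \subseteq \grad(R_\bullet)$. Moreover, viewed as an $R_0$-module, $S_i$ is simple (any $R_0$-submodule is automatically a graded $R_\bullet$-submodule), and conversely every simple $R_0$-module can be placed in any single degree with $R_+$ acting trivially to produce a graded simple. This correspondence is exactly the assertion of part (3) modulo the identification $R_\bullet / \grad(R_\bullet) \simeq R_0/\rad(R_0)$, which I will obtain from part (2).

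For parts (1) and (2), the annihilator of a graded simple $S$ concentrated in degree $i$ with underlying simple $R_0$-module $T$ equals $R_+ \oplus \operatorname{Ann}_{R_0}(T)$. Intersecting over all graded simples (equivalently, over all simple $R_0$-modules $T$) gives
\[ \grad(R_\bullet) = R_+ \oplus \bigcap_{T} \operatorname{Ann}_{R_0}(T) = R_+ \oplus \rad(R_0), \]
which proves (2). Intersecting with $R_0$ yields (1) immediately, and part (3) then reduces to the standard fact that simple modules over $R_0$ and over $R_0/\rad(R_0)$ coincide.

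Finally, for part (4) I would show that $\grad(R_\bullet) = \rad(R_0) \oplus R_+$ is a nilpotent ideal when $R$ is finite dimensional: $R_+$ is nilpotent because $R_+^m \subseteq R_{\geq m}$ and $R_{\geq m} = 0$ for $m$ large, $\rad(R_0)$ is nilpotent by classical theory, and the two pieces interact compatibly because $\rad(R_0) \cdot R_+ \subseteq R_+$, so a direct estimate shows some power of $\grad(R_\bullet)$ vanishes. Any nilpotent ideal is contained in $\rad(R)$, so $\grad(R_\bullet) \subseteq \rad(R)$. Conversely, $R/\grad(R_\bullet) \simeq R_0/\rad(R_0)$ is semisimple, forcing $\rad(R) \subseteq \grad(R_\bullet)$, which gives equality. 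I do not anticipate a genuine obstacle; the only subtle point is confirming that nothing goes wrong when the simple $R_0$-module chosen in each degree may differ, which is handled by the observation that shifts of a single graded simple realize the same underlying $R_0$-simple in every degree.
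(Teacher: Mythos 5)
The paper offers no proof of this proposition---it is quoted from the reference it cites---so your argument has to stand on its own. Its overall architecture is sound, but the step you organize everything around is justified by a false statement. The homogeneous components $S_j$ of a graded module are \emph{not} graded submodules in general: $R_i\cdot S_j\subseteq S_{i+j}$, so $S_j$ need not be closed under the action of $R_+$ (take $R=k[x]$ with $x$ in degree $1$ and $S=R/(x^2)$; then $S_0$ is not a submodule). Hence ``simplicity forces all but one summand to vanish'' does not follow as written; for a graded simple the components \emph{are} submodules only a posteriori, precisely because $R_+$ acts as zero, which is what you are trying to prove. The conclusion you want---that a graded simple over a \emph{nonnegatively} graded ring is concentrated in one degree---is true, but the correct argument must actually use nonnegativity, which your reasoning never invokes: the truncations $S_{\geq j}=\bigoplus_{i\geq j}S_i$ are graded submodules because $R_{\geq 0}\cdot S_{\geq j}\subseteq S_{\geq j}$; a nonzero homogeneous element of degree $i$ generates $S$, forcing $S=S_{\geq i}$, and then $S_{\geq i+1}$ is a proper graded submodule, hence zero. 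That the hypothesis is genuinely needed is shown by $R=k[x,x^{-1}]$, which is graded simple over itself but supported in all degrees.

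Once that step is repaired, the rest goes through: for $S$ concentrated in degree $i$ with underlying simple $R_0$-module $T$ one indeed gets $\operatorname{Ann}_R(S)=\operatorname{Ann}_{R_0}(T)\oplus R_+$, intersecting over all simple $R_0$-modules yields (2), and (1) and (3) follow as you say. Your argument for (4) is also correct in substance, though the ``direct estimate'' deserves one more sentence: a word of length $N$ in letters from $\rad(R_0)\cup R_+$ with fewer than $m$ letters from $R_+$ (where $R_{\geq m}=0$) contains, by pigeonhole, a consecutive run of $\rad(R_0)$-letters of length at least $(N-m+1)/m$, which vanishes for $N$ large since $\rad(R_0)$ is nilpotent; together with $R/\grad(R_\bullet)\simeq R_0/\rad(R_0)$ being semisimple this gives $\grad(R_\bullet)=\rad(R)$.
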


We furthermore use the following notation throughout, which will be useful when dealing with several gradings on the same ring $R$. 

\begin{defin}
    For a graded ring $R_\bullet$ and graded modules $M$, $N$, we denote by 
    \[ \Hom_{R_\bullet} (M, N) \text{ and } \Ext^i_{R_\bullet} (M, N) \]
    the graded Hom- and Ext-spaces. That means, $\Hom_{R_\bullet}(M, N)$ consists of homogeneous morphisms of degree $0$. 
\end{defin}

For a graded module $M$, we define the \textit{$j$-th graded shift of $M$} to be the graded module $M\langle j \rangle$ satisfying that 
\[ (M\langle i \rangle)_j = M_{j-i},\]
and having the same module structure as $M$ as an ungraded module. 

\subsubsection{Koszul algebras}
We collect some basic facts on Koszulity that will be of relevance for us. 

\begin{defin}
    A nonnegatively graded algebra $R_\bullet = \bigoplus_{i \geq 0} R_i$ with $S= R_0$ semisimple is called \emph{Koszul} if $S$, viewed as a graded $R_\bullet$-module, has a graded projective resolution so that the $i$-th term in the resolution is generated in degree $i$. 
\end{defin}

We will only work with \emph{locally finite dimensional} gradings, so we make this blanket assumption for the rest of the article. The following characterisation of Koszulity will be useful for us, and the equivalence to the definition used in this article can be found in \cite{martinez2007introduction}. 

\begin{prop}\cite[Theorem 2.8]{martinez2007introduction}\label{Prop: Graded algebra Koszul iff Yoneda is generated in 0 and 1}
    Let $R = \bigoplus_{i \geq 0} R_i$ be a locally finite dimensional, nonnegatively graded algebra with $R_0$ semisimple. Then $R$ is Koszul if and only if $\Ext^*_R(R_0, R_0) = \bigoplus_{i \geq 0} \Ext^i_R(R_0, R_0)$ is generated in cohomological degrees $0$ and $1$. 
\end{prop}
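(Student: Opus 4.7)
The plan is to exploit the bi-grading on the Yoneda algebra $E := \Ext^*_R(R_0, R_0)$: the cohomological degree $i$ and the internal degree $j$ inherited from the $\mathbb{Z}$-grading of $R$. Since $R_0$ is semisimple and $R$ is nonnegatively graded, a minimal graded projective resolution $P_\bullet \twoheadrightarrow R_0$ exists, and a straightforward induction on $i$ using minimality shows that $P_i$ is generated in internal degrees $\geq i$. Dualizing gives the vanishing $E^{i,j} = 0$ for $j < i$, so $E$ is supported in the region $j \geq i$. The Koszul condition from the definition is then equivalent to the sharper \emph{diagonality} condition $E^{i,j} = 0$ for $j \neq i$, and the proof reduces to showing that diagonality of $E$ is equivalent to $E$ being generated as an algebra in cohomological degrees $0$ and $1$.

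For the forward implication, assuming diagonality, I would invoke Priddy's construction. One forms the quadratic dual $R^! := T_{R_0}(R_1^*) / (R_2^\perp)$ and the Koszul complex $K_\bullet(R) := R \otimes_{R_0} (R^!)^*$. Diagonality of $E$ implies that $K_\bullet(R)$ is acyclic and provides a minimal graded projective resolution of $R_0$, and dualizing identifies $E$ with $R^!$ as bi-graded algebras. Since $R^!$ is generated by $R_0$ together with $R_1^*$ by construction, the Yoneda algebra $E$ is generated in cohomological degrees $0$ and $1$.

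For the converse, assume $E$ is generated as a graded algebra by $E^0 \oplus E^1$. The goal is to deduce the diagonality $E^{i,j} = 0$ for $j > i$. The key step, and the main obstacle, is to show that $E^{1,j} = 0$ for $j > 1$, equivalently that $R$ is generated in degree $1$. Granting this, an induction on $i$ finishes: any element of $E^i$ is a Yoneda product of elements from $E^{1} = E^{1,1}$, and such products land on the diagonal by an additive degree count since the Yoneda product is bi-graded. To prove the key step, I would argue by contradiction: a nonzero class in $E^{1,j}$ with $j > 1$ corresponds to a minimal generator of $R_+$ in degree $j$, and a careful analysis of the bar complex (or equivalently a Cartan--Eilenberg-style spectral sequence computing $E$ from the two gradings) produces off-diagonal classes in higher $\Ext^i$ that cannot be realized as Yoneda products of $E^1$-elements, contradicting the generation hypothesis. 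This last reduction is the delicate part of the argument and is exactly where the hypothesis on $R_0$ being semisimple and the local finite-dimensionality are essential, ensuring the spectral sequence converges and the comparison of the minimal and bar resolutions behaves well.
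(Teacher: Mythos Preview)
The paper does not prove this proposition; it is quoted with a citation to Mart\'inez-Villa, so there is no argument in the paper to compare yours against. Your forward direction is the standard Priddy/BGS line and is fine in outline.

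The converse, however, has a genuine gap precisely where you flag it as ``delicate''. You say the key step is to show $E^{1,j}=0$ for $j>1$ and propose to derive a contradiction by producing off-diagonal classes in higher $\Ext^i$ that cannot be realised as Yoneda products of $E^1$-elements. This cannot work as stated: a nonzero class in $E^{1,j}$ with $j>1$ already lies in cohomological degree $1$, so under the hypothesis it is itself a permitted generator, and its Yoneda powers populate the off-diagonal in higher cohomological degree without contradiction. Concretely, take $R=k[x]$ with $x$ placed in degree $2$. The minimal resolution $0\to R\langle -2\rangle\to R\to k\to 0$ has $P_1$ generated in degree $2$, so $R$ is not Koszul in the sense of the paper, yet $\Ext^*_R(k,k)\cong k[y]/(y^2)$ with $y$ in cohomological degree $1$ is trivially generated in cohomological degrees $0$ and $1$. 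The usual formulations of this equivalence (and presumably the cited theorem) assume that $R$ is generated over $R_0$ by $R_1$; with that hypothesis $E^{1,j}=0$ for $j>1$ is automatic, and then your degree-count induction $E^i=(E^{1,1})^i\cdot E^{0,0}\subseteq E^{i,i}$ finishes immediately. Every application in the present paper lives in that setting, so the imprecision in the statement is harmless there, but your attempt to \emph{deduce} the vanishing $E^{1,j}=0$ for $j>1$ from the generation hypothesis alone cannot be completed.
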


Note that if $A$ is a finite dimensional algebra, the filtration of $A$ by powers of the Jacobson radical $\rad(A)$ gives rise to the \emph{associated graded} algebra 
\[ \bigoplus_{i \geq 0} \rad(A)^i/\rad(A)^{i+1},  \]
where multiplication of homogeneous elements is defined via 
\[ (a + \rad(A)^{i+1} ) \cdot (b + \rad(A)^{j+1}) = (ab + \rad(A)^{i+j+1}), \] where $a \in \rad(A)^{i}$ and $b \in \rad(A)^{j}$. 

\begin{cor}\label{Cor: FD algebera Koszul iff Yoneda is gen in 0 and 1}
    Let $A$ be a finite dimensional algebra, and let $S = A/\rad(A)$. Then $A$ admits a Koszul grading if and only if $\Ext^*_A(S, S)$ is generated in cohomological degrees $0$ and $1$.   
\end{cor}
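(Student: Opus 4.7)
The plan is to establish the two implications separately. For the forward direction, I assume $A$ admits a Koszul grading $A_\bullet = \bigoplus_{i \geq 0} A_i$. Since $A_0$ is semisimple, \Cref{Pro: Properties of gradings} parts (2) and (4) give $\rad(A) = A_+$ and hence $S = A/\rad(A) \cong A_0$. Applying \Cref{Prop: Graded algebra Koszul iff Yoneda is generated in 0 and 1} yields that the graded Yoneda algebra is generated in cohomological degrees $0$ and $1$. The Koszul property forces $\Ext^i_{A_\bullet}(S, S\langle j\rangle)$ to vanish whenever $i \neq j$, so the ungraded Yoneda algebra decomposes as $\Ext^*_A(S, S) = \bigoplus_i \Ext^i_{A_\bullet}(S, S\langle i \rangle)$ and coincides with the graded one up to reindexing. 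Generation in cohomological degrees $0$ and $1$ is then inherited.

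For the reverse direction, I would pass to the associated graded algebra $\gr A = \bigoplus_{i \geq 0} \rad(A)^i/\rad(A)^{i+1}$, which has $(\gr A)_0 = S$ semisimple by construction. The strategy is to first show that $\gr A$ is Koszul, and then that $A \cong \gr A$ as ungraded $k$-algebras. For the first step, the radical filtration on $A$ induces a spectral sequence relating $\Ext^*_{\gr A}(S, S)$ and $\Ext^*_A(S, S)$; the generation hypothesis on the Yoneda algebra of $A$ is transferred to that of $\gr A$, after which \Cref{Prop: Graded algebra Koszul iff Yoneda is generated in 0 and 1} applied to $\gr A$ gives Koszulity. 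For the second step, I would use \Cref{Thm: Wedderburn-Malcev} to fix a semisimple subalgebra $S \subseteq A$ with $A = S \oplus \rad(A)$, and exploit that $\gr A$ is quadratic (as a Koszul algebra) to lift its defining relations canonically to relations in $A$. Composing, the Koszul grading on $\gr A$ transports along the isomorphism $A \cong \gr A$ to yield the desired Koszul grading on $A$.

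The main obstacle is the reverse direction, specifically the step establishing $A \cong \gr A$. The hypothesis on the Yoneda algebra is used precisely as a rigidity statement: the absence of generators in cohomological degree $\geq 2$ ensures that no obstructions arise when lifting the quadratic relations of $\gr A$ back to $A$. Handling this lifting carefully—so that the resulting isomorphism respects both the radical filtration and the chosen semisimple subalgebra—is the key technical ingredient. The spectral sequence comparison needed to deduce Koszulity of $\gr A$ is also somewhat subtle, but it is a more standard piece of homological machinery that one can carry out degree by degree once the filtration is in place.
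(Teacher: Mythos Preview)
Your forward direction matches the paper's. The paper's overall argument is, however, far more compressed than yours: it simply invokes \cite[Corollary~2.5.2]{BGS96} to say that any Koszul grading on a finite-dimensional $A$ must be graded isomorphic to the radical-layer grading $\bigoplus_i \rad(A)^i/\rad(A)^{i+1}$, and then applies \Cref{Prop: Graded algebra Koszul iff Yoneda is generated in 0 and 1} directly to that one grading. The paper does not separate the two directions, introduce spectral sequences, or carry out a lifting argument; the substantive content of the reverse implication is effectively outsourced to the cited literature.

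Your more self-contained route for the reverse direction has a gap in step~(1). The spectral sequence attached to the radical filtration has its initial page built from $\Ext^*_{\gr A}(S,S)$ and abuts to $\Ext^*_A(S,S)$, so the generation hypothesis on $\Ext^*_A$ does not transfer to $\Ext^*_{\gr A}$ by a direct spectral-sequence comparison---the information flows the wrong way. To make this work one needs a different argument, for instance showing that generation of $\Ext^*_A(S,S)$ in cohomological degrees $0$ and $1$ forces the minimal projective resolution of $S$ over $A$ to be \emph{linear} with respect to the radical filtration; passing to the associated graded then yields a linear resolution over $\gr A$, giving Koszulity of $\gr A$ and simultaneously the isomorphism $A \cong \gr A$. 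Done this way, your step~(2) becomes a consequence of step~(1) rather than a separate lifting problem. What each approach buys: the paper's is short and leans on a standard reference; yours, once the linearity argument replaces the spectral-sequence step, would be genuinely self-contained.
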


\begin{proof}
    By \cite[Corollary 2.5.2]{BGS96}, any Koszul grading on a finite dimensional algebra $A$ is graded isomorphic to the grading induced by $A = \bigoplus_{i \geq 0} \rad^i(A)/\rad^{i+1}(A)$, so the previous proposition can be applied to this grading. 
\end{proof}

We also need that locally finite dimensional Koszul gradings are essentially unique. For Koszul gradings of finite dimensional algebras, this was already noted in \cite[Corollary 2.5.2]{BGS96}. We rely on the work of Gaddis, which is the non-connected version of a lemma of Bell and Zhang \cite{BellZhang}.

\begin{thm}\cite{gaddis2021isomorphisms}
    Let $Q$ and $Q'$ be finite quivers, and equip $kQ$ and $kQ'$ with the path-length grading. Let $R = kQ/I$ and $R' = kQ'/I'$, where $I$ and $I'$ are homogeneous ideals generated in degree at least $2$. If $R \simeq R'$ as ungraded algebras, then $R \simeq R'$ as graded algebras with respect to the induced path-length gradings. 
\end{thm}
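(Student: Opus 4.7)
The plan is to take an arbitrary ungraded isomorphism $\varphi \colon R \to R'$ and correct it, via \Cref{Thm: Wedderburn-Malcev}, into an isomorphism that preserves the descending filtration by $R_{\geq i}$; the desired graded isomorphism then appears as the associated graded of this filtered map. The structural observation underlying the argument is that the hypothesis that $I$ is generated in degree $\geq 2$ forces $R$ to be generated as an algebra over $R_0$ by $R_1$, and therefore $R_{\geq i} = R_+^i$ for every $i \geq 0$, with the analogous statement for $R'$. In particular, preserving the full filtration reduces to preserving the single ideal $R_+$.

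First, I would align the degree $0$ parts. Since $\varphi(R_0) \subseteq R'$ is a semisimple subalgebra, \Cref{Thm: Wedderburn-Malcev} applied to $\varphi(R_0) \subseteq R'$ (with $R'_0$ as the fixed Wedderburn--Malcev complement) produces an element $r \in \rad(R')$ with $(1+r)^{-1} \varphi(R_0) (1+r) \subseteq R'_0$. Setting $\psi := \operatorname{Ad}(1+r)^{-1} \circ \varphi$ yields an ungraded algebra isomorphism $R \to R'$ with $\psi(R_0) \subseteq R'_0$; comparing dimensions (or applying the same argument in the other direction) forces $|Q_0| = |Q'_0|$ and $\psi(R_0) = R'_0$.

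Next, I would argue that $\psi(R_+) = R'_+$, so that $\psi$ preserves the filtration. In the finite-dimensional situation this is automatic: both $R_+$ and $R'_+$ are nilpotent ideals with semisimple quotients and thus coincide with the Jacobson radicals, which are preserved by any algebra isomorphism. Raising to powers then gives $\psi(R_{\geq i}) = \psi(R_+^i) = (R'_+)^i = R'_{\geq i}$ for every $i \geq 0$. Passing to the associated graded of the $R_{\geq \bullet}$-filtration and using the canonical graded identifications
\[ \bigoplus_{i \geq 0} R_{\geq i}/R_{\geq i+1} \;\cong\; R \quad \text{and} \quad \bigoplus_{i \geq 0} R'_{\geq i}/R'_{\geq i+1} \;\cong\; R' \]
(both algebras are already graded, so each $R_i$ is a canonical complement to $R_{\geq i+1}$ inside $R_{\geq i}$), one extracts the desired graded algebra isomorphism $R \to R'$ as $\operatorname{gr} \psi$.

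The main obstacle is the step $\psi(R_+) = R'_+$ when $R$ and $R'$ are only locally finite-dimensional, since then $R_+$ may strictly contain $\rad(R)$ and is no longer recoverable as a Jacobson-type invariant. In that generality $\psi$ need not itself be filtration-preserving (easy example: $R = R' = k[x]$ with $x \mapsto x+1$), and the graded isomorphism has to be built more directly, for instance by defining $\tilde\psi$ on arrows via the degree-$1$ component of $\psi$ applied to each arrow of $Q$ and then verifying that this assignment extends to an algebra map by checking it against a generating set of the relations, using crucially that $I$ and $I'$ are concentrated in degrees $\geq 2$.
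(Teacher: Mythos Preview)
The paper does not prove this theorem; it is quoted as an external result of Gaddis (building on Bell--Zhang) and used as a black box in the subsequent corollary. So there is no in-paper argument to compare against, and I can only assess your proposal on its own terms.

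Your finite-dimensional argument is correct and clean: once Wedderburn--Malcev aligns the semisimple parts, the equality $R_+ = \rad(R)$ forces $\psi$ to preserve the whole filtration, and passing to associated graded finishes. This would already suffice for the almost Koszul applications in the paper.

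The gap is exactly where you locate it, but your sketch does not close it, and the infinite-dimensional case is the one the paper actually needs (the corollary is applied to higher preprojective algebras of $n$-representation infinite algebras). Two specific issues:
\begin{enumerate}
\item \Cref{Thm: Wedderburn-Malcev} as stated is for finite-dimensional $A$, so even the first step of conjugating $\varphi(R_0)$ into $R'_0$ is not justified when $R'$ is infinite dimensional; you need either an infinite-dimensional variant or a separate argument.
\item Your recipe ``take the degree-$1$ component of $\psi$ on each arrow'' is incomplete. If $a$ is a loop at $i$, then $\psi(a)$ can have a nonzero degree-$0$ part $\lambda\,\psi(e_i)$ (your own example $k[x]\to k[x]$, $x\mapsto x+1$ exhibits this), and this contaminates the degree-$d$ component of $\psi(r)$ for a relation $r$ of degree $d$, so the relation check you describe does not go through directly. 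Moreover, even when the relation check succeeds you still owe a proof that the resulting graded map $\tilde\psi$ is bijective; in particular that $\tilde\psi(R_1)=R'_1$, which is not an immediate consequence of $\psi$ being surjective.
\end{enumerate}
The Bell--Zhang/Gaddis proofs do handle these points, but they require more than the one line you give; as written, your proposal establishes the theorem only under the additional hypothesis $\dim_k R < \infty$.
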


\begin{cor}\label{Cor: Koszul grading is unique}
    A locally finite dimensional Koszul grading on $R$ such that $R_0$ is basic is unique up to graded isomorphism. 
\end{cor}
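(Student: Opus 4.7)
The plan is to reduce the statement to Gaddis's theorem by showing that any locally finite dimensional Koszul grading on $R$ with $R_0$ basic presents $R$ as a path algebra of a finite quiver modulo a homogeneous ideal generated in degree at least $2$, with the Koszul grading identified with the path-length grading.

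First, I would fix a Koszul grading $R_\bullet$ on $R$. Because $k$ is algebraically closed and $R_0$ is basic semisimple, we get $R_0 \simeq k^{n}$ for some finite $n$, so $R_0 = kQ_0$ for a finite set of vertices $Q_0$. Since $R_\bullet$ is Koszul, it is generated in degrees $0$ and $1$, so the multiplication map induces a surjection of graded algebras $T_{R_0}(R_1) \twoheadrightarrow R$; by quadraticity of Koszul algebras, its kernel $I$ is generated in degree $2$. Local finite dimensionality makes $R_1$ a finite dimensional $R_0$-bimodule, and choosing a bimodule basis of $R_1$ realises $T_{R_0}(R_1)$ as $kQ$ for a finite quiver $Q$ with vertex set $Q_0$, in a way that sends tensor degree to path-length. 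Thus $R \simeq kQ/I$ as graded algebras, where the right-hand side carries the path-length grading and $I$ is generated in degree at least $2$.

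Applying the same construction to a second Koszul grading $R'_\bullet$ on $R$ yields a presentation $R \simeq kQ'/I'$ with $I'$ homogeneous and generated in degree at least $2$, again with the Koszul grading corresponding to path-length. Since these two path-algebra presentations agree as ungraded algebras (both equal $R$), Gaddis's theorem produces an isomorphism $kQ/I \xrightarrow{\sim} kQ'/I'$ respecting the path-length gradings. Transporting this back through the two identifications produces the required graded isomorphism $R_\bullet \simeq R'_\bullet$.

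The main obstacle I anticipate is bookkeeping: I need to check carefully that the isomorphism $T_{R_0}(R_1) \simeq kQ$ really does identify the Koszul grading with path-length, which hinges on Koszulity forcing generation in degrees $0$ and $1$ together with quadraticity, and on $R_0$ being concentrated in degree $0$ so that $kQ_0$ sits in degree $0$. Once the two presentations are correctly aligned, Gaddis's theorem does the rest.
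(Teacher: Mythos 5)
Your proof is correct and takes essentially the same approach as the paper: both use quadraticity of the Koszul algebra to present $R_\bullet$ as $kQ/I$ with the Koszul grading identified with the path-length grading, and then apply Gaddis's theorem to the two presentations. Your write-up is just a more detailed spelling-out of the same argument.
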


\begin{proof}
    Let $R_\bullet = \bigoplus_{i \geq 0} R_i$ be a locally finite dimensional Koszul grading. Then $R$ is in particular quadratic, so we have $R \simeq \operatorname{T}_{R_0} R_1/\langle V \rangle$ where $V \subseteq R_1 \otimes_{R_0} R_1$. Thus, we may write $R \simeq kQ/I$ for $Q_0$ being a choice of idempotents in $R_0$, and $Q_1$ being a choice of $R_0$-linearly independent generators in $R_1$. Then $I$ is homogeneous and generated in degree $2$, namely by the paths corresponding to elements in $V$. Doing this twice for two Koszul gradings and applying Gaddis' theorem gives a graded isomorphism for the two Koszul gradings.   
\end{proof}

Hence, in the basic case, we can fix a semisimple base ring. 

\begin{example}
    Consider the polynomial ring $R = k[x] = k[x+1]$, with two gradings placing $x$ respectively $x+1$ in degree $1$. Then the degree $0$ parts are not isomorphic as simple $R$-modules. Of course, the automorphism exchanging $x$ and $x+1$ is a graded isomorphism for the two gradings, so we can adjust one grading so that the degree $0$ parts coincide. 
\end{example}

\subsubsection{Almost Koszul algebras}
Next, we give a brief summary of almost Koszul algebras, as introduced by Brenner, Butler and King \cite{BBK02}. 

\begin{defin}
    A nonnegatively graded algebra $R_\bullet = \bigoplus_{i \geq 0} R_i$ with $S = R_0$ semisimple is called \emph{almost Koszul}, or \emph{$(p,q)$-Koszul}, if there exist integers $p,q \geq 1$ such that $R_n = 0$ for all $n>p$ and such that there is an exact sequence
    \[ 0 \to S' \to P_q \to  \cdots \to P_0 \to S \to 0   \]
    so that $P_i$ is projective and generated in degree $i$ and $S'$ is semisimple and concentrated in degree $p+q$. 
\end{defin}

Since we only consider locally finite gradings, it follows immediately that an almost Koszul algebra $R_\bullet$ is finite dimensional. We furthermore have the following observation. 

\begin{remark}
    Let $R_\bullet$ be almost Koszul. Then the Jacobson radical and the graded radical coincide, that is we have $\rad(R) = \grad(R_\bullet) = R_{\geq 1}$. Therefore, $R_\bullet \simeq \bigoplus_{i \geq 0} \rad(R)^i/\rad(R)^{i+1}$ is a graded isomorphism. 
\end{remark}

\subsection{\texorpdfstring{$(n+1)$}{(n+1)}-preprojective algebras}
We are interested in how an (almost) Koszul grading interacts with a higher preprojective grading, so we summarise the construction of such a grading here. Let $A$ be $n$-hereditary, then Iyama and Oppermann defined in \cite{IO13} the $(n+1)$-preprojective algebra of $A$ as 
\[ \Pi_{n+1}(A) = \operatorname{T}_A \Ext^n_A(D(A), A) \]
where $D(A) = \Hom_k(A,k)$ is the usual $k$-duality. The higher preprojective algebra has a grading induced from tensor degree, which we refer to as the \emph{higher preprojective grading}. Note that $\Pi_{n+1}(A)$ has some special properties as a graded algebra depending on whether $A$ is $n$-representation finite or $n$-representation infinite. 

\begin{remark}
    If $A$ is $n$-representation finite, then $R= \Pi_{n+1}(A)$ is finite dimensional. We denote the higher preprojective grading by ${}_\bullet R$. Suppose there exists some other nonnegative grading $R_\bullet$ on $R$. Then it follows from \Cref{Pro: Properties of gradings} that 
    \[ \grad({}_\bullet R ) = \rad(R) = \grad(R_\bullet). \]
\end{remark}

In the $n$-representation infinite case, $\Pi_{n+1}(A)$ is infinite dimensional. The relevant properties of $\Pi_{n+1}(A)$ are summarised in the following definition. 

\begin{defin}
    A nonnegatively graded algebra ${}_\bullet R = \bigoplus_{i \geq 0} {}_i R$ is called bimodule $(n+1)$-Calabi-Yau of Gorenstein parameter $a$ if there exists a bounded graded projective bimodule resolution $P$ of ${}_\bullet R$ with an isomorphism 
    \[ P \simeq \Hom_{R^e}(P, R^e)[n+1]\langle -a \rangle   \]
    of complexes of graded bimodules, where $[n+1]$ is the cohomological shift and $\langle -a \rangle$ is degree shift. 
\end{defin}
 
The characterisation of higher preprojective algebras of $n$-representation infinite algebras is as follows. 

\begin{thm}
    There is a bijection between $n$-representation infinite algebras $A$ up to isomorphism and locally finite dimensional bimodule $(n+1)$-Calabi-Yau algebras ${}_\bullet R$ of Gorenstein parameter $1$, both sides taken up to isomorphism. The bijection is given by 
    \[ A \mapsto \Pi_{n+1}(A) \text{ and } {}_\bullet R  \mapsto {}_0 R. \]
\end{thm}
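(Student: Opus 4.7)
The plan is to establish the theorem via the standard two-direction argument, treating it as an instance of the Keller/Minamoto--Mori correspondence.

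First, for the forward direction, given an $n$-representation infinite algebra $A$, I would construct an explicit bimodule resolution of $R := \Pi_{n+1}(A)$ realising the Calabi--Yau duality. Since $A$ has global dimension $n$, one fixes a projective bimodule resolution of $A$ over $A^e$ of length $n$. Tensoring this resolution on both sides with $R$ over $A$ produces a length-$(n+1)$ graded projective bimodule resolution of $R$ over $R^e$, where the extra term at position $n+1$ accounts for the generators of $R$ in higher preprojective degree $1$, i.e.\ the bimodule $E := \Ext^n_A(DA, A)$. Concretely, the $n$-representation infinite hypothesis ensures that $E$ behaves well under iterated higher Auslander--Reiten translates, which is exactly what is needed to identify the dual of this resolution (after a shift in both cohomological and internal degree) with the resolution itself. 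The Gorenstein parameter equals $1$ because the twist $\langle -1 \rangle$ reflects the fact that $E$ sits in higher preprojective degree $1$.

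For the backward direction, I would start from a locally finite dimensional bimodule $(n+1)$-Calabi-Yau algebra ${}_\bullet R$ of Gorenstein parameter $1$ and set $A := {}_0 R$. Restricting the CY bimodule resolution of $R$ to degree $0$ on one side and using the graded decomposition yields information about $A$-projective resolutions of the graded pieces ${}_i R$ viewed as $A$-modules. From this one deduces that $A$ has global dimension $n$, and the Gorenstein parameter $1$ forces the bimodule $E = {}_1 R$ to satisfy $E \simeq \Ext^n_A(DA, A)$ as $A$-bimodules. The $n$-representation infinite condition (vanishing of higher cohomology of all powers of the inverse Nakayama functor applied to $A$) then follows from the internal periodicity provided by the CY self-duality: each application of $\nu_n^{-1}$ corresponds to tensoring with $E$, and the CY property guarantees concentration in cohomological degree $0$.

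The mutual inversion is then essentially formal. On one side, by construction ${}_0 \Pi_{n+1}(A) = A$. On the other side, the identification $E \simeq \Ext^n_A(DA, A)$ from the previous paragraph, combined with the fact that $R$ is generated over ${}_0 R$ by ${}_1 R$ (which is again a consequence of Gorenstein parameter $1$ together with the CY resolution), yields a graded surjection $\Pi_{n+1}({}_0 R) \twoheadrightarrow R$. Comparison of graded dimensions via the Hilbert series encoded in the CY resolution shows this surjection is an isomorphism.

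The main obstacle I expect is the careful bookkeeping in identifying the self-dual resolution for the forward direction and, correspondingly, extracting the $n$-representation infinite condition from the CY resolution in the backward direction. Both require reading off the right vanishing and periodicity statements from the graded bimodule resolution, and doing so cleanly enough that the mutual inversion falls out without extra work. Since this bijection is well-established in the literature (cf.\ Keller's work on Ginzburg dg algebras and Minamoto--Mori's work on AS-regular algebras, as used in \cite{IO13}), in practice one would cite these sources rather than reproduce the argument in full.
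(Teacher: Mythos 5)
The paper states this theorem without proof: it is the known characterisation of higher preprojective algebras of $n$-representation infinite algebras (due to Keller, Minamoto--Mori, Amiot--Iyama--Reiten and Herschend--Iyama--Oppermann, cf.\ \cite{HIO14, MM11}), quoted as background, so there is no in-paper argument to compare against. Your outline follows the standard route from those sources --- building the self-dual graded bimodule resolution of $\operatorname{T}_A \Ext^n_A(D(A),A)$ for the forward direction and reading off $\operatorname{gldim} {}_0R \leq n$, the identification ${}_1R \simeq \Ext^n_{{}_0R}(D({}_0R), {}_0R)$ and the vanishing conditions from the Calabi--Yau resolution for the converse --- and you correctly conclude that in this context one would cite the literature rather than reproduce the argument, which is exactly what the paper does.
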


\begin{remark}\label{Rem: On graded coherence}
    An important property a higher preprojective algebra ${}_\bullet R$ may possess is that of being \emph{graded coherent}. This means that the category of finitely presented graded left ${}_\bullet R$-modules is closed under kernels. The coherence property and its implications for higher Auslander-Reiten theory have been studied in \cite{Min12, MM11} as well as \cite[Section 6]{HIO14}, and the question when the higher preprojective algebra is graded coherent was posed in \cite[Question 4.37]{HIO14}. We note that for example if $A$ is \emph{$n$-representation tame}, then $\Pi_{n+1}(A)$ is graded coherent.  
    However, note that \cite{Gelinas-thesis} has used a relationship between certain kinds of $t$-structures on stable categories of maximal Cohen--Macaulay modules and the notion of absolute Koszulity to construct an infinite family of higher preprojective algebras that are not graded coherent. 
    We therefore point out that we do not assume graded coherence of the higher preprojective algebra.
\end{remark}

The discussion of higher preprojective algebras shows that in both the $n$-representation finite and the $n$-representation infinite case, we can start out with an ungraded algebra $R$, satisfying some natural assumptions, and ask whether it can be equipped with an additional grading making it higher preprojective. This is particularly interesting in the $n$-representation infinite case, see \cite{Thi20}, and many constructions of $n$-representation infinite algebras proceed in this way. 

\subsection{The Grant-Iyama construction}
In this subsection, we recall a construction due to Grant and Iyama in \cite{GI19}. For this, we start with an $n$-hereditary algebra $A$, and furthermore assume that it has a Koszul grading $A_\bullet$. With respect to the grading on $A$, the generating bimodule $\Ext^n_A(D(A), A)$ of the preprojective algebra becomes graded via  
\[ \Ext^n_A(D(A), A) = \bigoplus_{i \in \mathbb{Z}} \Ext^n_{\gr A_\bullet}(D(A), A\langle j \rangle ).  \]
This way, the $(n+1)$-preprojective algebra 
\[ \Pi = \Pi_{n+1}(A) = \operatorname{T}_A \Ext^n_A(D(A), A), \]
becomes $\mathbb{Z}^2$-graded. The first part of the grading is the usual higher preprojective grading induced from tensor degrees, while the second part of this grading is induced from the grading on $\Ext^n_A(D(A), A)$ and naturally extended to all tensor powers $\Ext^n_A(D(A), A)^{\otimes i}$. 

This $\mathbb{Z}^2$-grading gives rise to the \emph{$(n+1)$-total grading}, given by 
\[ \Pi(A)_{m} = \bigoplus_{(n+1)i + j = m} \Pi_{i,j}. \]

\begin{thm}\cite[Theorem 4.21]{GI19} \label{Thm: GI construction}
    Let $A_\bullet$ be $n$-hereditary and Koszul. 
    \begin{enumerate}
        \item If $A$ is $n$-representation finite, then $\Pi(A)$ with the $(n+1)$-total grading is $(p,n+1)$-Koszul, where $p = \max \{ i \geq 0 \mid \Pi_i \neq 0 \} $.
        \item If $A$ is $n$-representation infinite, then $\Pi(A)$ with the $(n+1)$-total grading is Koszul.
    \end{enumerate}
\end{thm}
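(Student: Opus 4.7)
The plan is to produce, in both cases, a minimal graded projective resolution of the degree-zero semisimple module $S = \Pi_0/\rad(\Pi_0) = A/\rad(A)$ as a left $\Pi$-module in the $(n+1)$-total grading, and to verify that the $i$-th term is generated in total degree $i$ (for part (2)), respectively that the resolution closes into an almost Koszul exact sequence with semisimple kernel in degree $p+n+1$ (for part (1)). First I would set up the Koszul resolution of $S$ over $A$: since $A_\bullet$ is Koszul and $n$-hereditary (so of global dimension $\leq n$), one has $0 \to A \otimes_S V_n \to \cdots \to A \otimes_S V_0 \to S \to 0$ with $V_i$ an $S$-bimodule placed in Koszul degree $i$. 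Tensoring along $A \hookrightarrow \Pi$ gives projective $\Pi$-modules $P_i = \Pi \otimes_S V_i$ which, viewed in the $(n+1)$-total grading, sit in preprojective degree $0$ and Koszul degree $i$, hence in total degree $i$. The key numerical input, to be established separately, is that the generating bimodule $E = \Ext^n_A(DA, A)$ of the tensor algebra $\Pi = \operatorname{T}_A E$ is concentrated in Koszul degree $-n$; this is a Nakayama-duality identification of $E$ with an appropriate shift of the top Koszul piece $V_n$. Combined with its preprojective degree $1$, this places $E$ in $(n+1)$-total degree $(n+1) - n = 1$, so that $\Pi$ is generated over $S$ in total degrees $0$ and $1$ — a prerequisite for the $(n+1)$-total grading to be (almost) Koszul.

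Next I would continue the resolution past position $n$ using the tensor-algebra short exact sequence of $\Pi$-$A$-bimodules $0 \to \Pi \otimes_A E \to \Pi \to A \to 0$. Splicing this with the Koszul resolution of $S$ over $A$ yields an $(n+1)$-periodic pattern: after the first $n+1$ linear terms the kernel is $\Pi \otimes_S E$, a projective $\Pi$-module generated in total degree $n+1$, which can then be resolved by a twisted copy of the Koszul resolution of $A$, and so on. Each full cycle raises the total degree by exactly $n+1$, so the $i$-th term of the global resolution is generated in $(n+1)$-total degree $i$. For the $n$-representation infinite case, exactness of the spliced complex is ensured by the bimodule $(n+1)$-Calabi-Yau property of $\Pi$, which encodes precisely the self-duality of such resolutions; this establishes part (2). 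For the $n$-representation finite case the iteration cannot continue because $\Pi$ is finite dimensional with $\Pi_{p+1}=0$: one full cycle of length $n+1$ is performed, and the would-be kernel is a semisimple summand in preprojective degree $p$ and Koszul degree $0$, hence in total degree $p + n + 1$. This is the $(p, n+1)$-almost Koszul sequence required in part (1).

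The hard part will be the Nakayama-type identification placing $E$ in Koszul degree $-n$ and the verification that the spliced complex above is exact. Once the grading of $E$ is in hand, exactness is automatic in the $n$-representation infinite case from the Calabi-Yau duality; in the $n$-representation finite case it follows from the bounded preprojective structure of $\Pi$ together with the same splicing, ensuring that the kernel after $n+1$ steps is concentrated in the predicted semisimple degree.
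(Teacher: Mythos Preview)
The paper does not prove this theorem: it is quoted verbatim from Grant--Iyama \cite[Theorem~4.21]{GI19} and used as a black box. There is therefore no proof in the paper to compare your proposal against.

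That said, your sketch is in the spirit of the Grant--Iyama argument but contains inaccuracies that would need to be fixed. First, $E = \Ext^n_A(DA,A)$ is only \emph{generated} in Koszul degree $-n$ (this is exactly \Cref{Lem: E is generated in deg -n}, also cited from \cite{GI19}), not \emph{concentrated} there; higher-degree pieces of $E$ arise as $A_{+}\cdot E_{-n}\cdot A_{+}$. This still suffices for $\Pi$ to be generated in total degrees $0$ and $1$, but your ``Nakayama-duality identification of $E$ with a shift of $V_n$'' only holds at the level of generators. Second, the splicing step via $0 \to \Pi\otimes_A E \to \Pi \to A \to 0$ does not directly yield a projective resolution: $\Pi\otimes_A E$ is projective over $\Pi$ only when $E$ is projective over $A$, which is not automatic. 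What Grant--Iyama actually do is give an explicit quadratic presentation $\Pi \simeq \operatorname{T}_{S}(A_1 \oplus E_{-n})/(\text{quadratic relations})$ and construct the bimodule Koszul complex directly from this presentation, verifying exactness and self-duality by hand; the $(p,n{+}1)$-almost Koszul property in the $n$-representation finite case and the Koszul property in the $n$-representation infinite case then fall out of the shape of that complex. Your periodic-splicing picture is a useful heuristic for why the answer should be $(n{+}1)$-periodic, but it is not a substitute for that construction.
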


The authors also note the following from the construction of the grading. 

\begin{lem}\cite[Proposition 4.16]{GI19} \label{Lem: E is generated in deg -n}
    If $A_\bullet$ is Koszul, then the $A$-bimodule $\Ext^n_A(D(A), A)$ is generated in degree $-n$ with respect to the Koszul grading. 
\end{lem}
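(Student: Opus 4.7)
The plan is to compute $E := \Ext^n_A(D(A), A)$ as an $A$-bimodule by means of the bimodule Koszul resolution of $A$, reading off the degree of its minimal bimodule generators from a right socle computation. We may assume $A$ has global dimension exactly $n$, since otherwise $E = 0$ and the statement is vacuous.

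I would start from the bimodule Koszul resolution of $A$,
\[ 0 \to \mathcal{K}_n \to \dots \to \mathcal{K}_0 \to A \to 0, \qquad \mathcal{K}_i = A \otimes_S K_i \otimes_S A, \]
where $S = A_0$ and the $(S,S)$-bimodule $K_i$ is concentrated in Koszul degree $i$. Because each $\mathcal{K}_i$ is projective as a right $A$-module, tensoring on the right with $D(A)$ produces a projective resolution of $D(A)$ as a left $A$-module with $P_i := A \otimes_S V_i$ and $V_i := K_i \otimes_S D(A)$. Applying $\Hom_A(-, A)$, the tensor-hom adjunction together with semisimplicity of $S$ then gives an identification of $A$-bimodules
\[ \Hom_A(P_i, A) \cong V_i^* \otimes_S A, \qquad V_i^* := \Hom_S(V_i, S), \]
in which the right $A$-action comes from the right action on $A$ while the left $A$-action is induced by the right $A$-action on $V_i$ inherited from $D(A)$.

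The decisive step is to show that $V_n^* \otimes_S A$ is generated in Koszul degree $-n$ as an $A$-bimodule. The top of $V_n^*$, viewed as a left $A$-module, coincides with the dual of the right socle of $V_n$. Now, the right socle of the injective cogenerator $D(A)$ is $D(A/\rad(A)) = D(S) = S$, which sits in degree $0$; hence the right socle of $V_n = K_n \otimes_S D(A)$ is $K_n \otimes_S S = K_n$, concentrated in degree $n$. Dualizing places the top of $V_n^*$ in degree $-n$, so $V_n^* \otimes_S A$ is bimodule-generated in degree $-n$. Since $E$ is the cokernel of $\Hom_A(P_{n-1}, A) \to \Hom_A(P_n, A)$, it is an $A$-bimodule quotient of $V_n^* \otimes_S A$ and therefore inherits generation in degree $-n$.

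The main technical point I anticipate is verifying that all the identifications above, especially the tensor-hom adjunction and the socle computation, respect the full $A$-bimodule structure rather than merely a one-sided module structure, since the bimodule generation claim is strictly stronger than its one-sided analogue.
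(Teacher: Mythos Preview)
The paper does not give its own proof of this lemma; it is stated as a citation of \cite[Proposition 4.16]{GI19} and used as a black box. So there is nothing in the present paper to compare your argument against.

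That said, your direct argument is essentially correct and is close in spirit to how Grant--Iyama proceed. A few remarks. First, when you say ``tensoring on the right with $D(A)$ produces a projective resolution of $D(A)$'', you are implicitly using that each $\mathcal{K}_i = A \otimes_S K_i \otimes_S A$ is projective as a right $A$-module (so that $-\otimes_A D(A)$ preserves exactness) and projective as a left $A$-module (so the resulting complex consists of projectives); both hold since $S$ is semisimple, but it is worth saying. Second, the identification $\Hom_A(P_i,A)\cong V_i^\ast \otimes_S A$ is cleanest if you note that $S$ is Frobenius, so $\Hom_S(-,S)\cong D(-)$ as functors to $(A,S)$-bimodules; this makes the passage from the right socle of $V_n$ to the left top of $V_n^\ast$ transparent, since $\operatorname{top}_l D(V_n)\cong D(\operatorname{soc}_r V_n)$. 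Your socle computation $\operatorname{soc}_r(K_n\otimes_S D(A)) = K_n\otimes_S D(S)$ in degree $n$ is justified because $K_n$ is $S$-projective and the right $A$-action is entirely on the $D(A)$ factor. Finally, the step ``$E$ is a bimodule quotient of $V_n^\ast\otimes_S A$, hence inherits generation in degree $-n$'' is fine: a graded quotient of a bimodule generated in degree $-n$ is again generated by its degree $-n$ part (possibly zero, but then $E=0$).
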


\subsection{Higher preprojective cuts}
We are now ready to make our motivating question precise. We have seen that if an $n$-hereditary algebra $A$ is Koszul, then $\Pi_{n+1}(A)$ can be endowed with an additional grading, the $(n+1)$-total grading. Furthermore, it is easy to see that these two gradings form a $\mathbb{Z}^2$-grading. Conversely, given an (almost) Koszul graded algebra $R_\bullet$, one may try to give it a higher preprojective grading. In the basic case, it is tempting to use the (almost) Koszul grading to build a quiver presentation for $R$, and then grade the quiver directly. We give this kind of grading a special name. 

\begin{defin}
    A higher preprojective grading ${}_\bullet R$ on an (almost) Koszul algebra $R_\bullet$ is called a \emph{cut} if the following are satisfied: 
    \begin{enumerate}
        \item The (almost) Koszul generating bimodule $R_1$ decomposes into preprojectively homogeneous pieces of degrees $0$ and $1$, i.e.\ we have $R_1 = {}_0(R_1) \oplus {}_1(R_1)$ such that ${}_i(R_1) \leq {}_iR$ as vector spaces.  
        \item The decomposition recovers the preprojective grading, i.e.\ we have ${}_0R = \langle R_0 \oplus {}_0(R_1) \rangle $ as algebras and ${}_1R$ is generated by ${}_1(R_1) $ as an ${}_0R$-bimodule. 
    \end{enumerate}
\end{defin}

Note that in this definition, being a cut depends on the chosen (almost) Koszul grading. The first noteworthy consequence of this definition is that a cut induces a bigrading on $R$. 

\begin{prop}
    Let ${}_\bullet R$ be a cut on the (almost) Koszul algebra $R_\bullet$. Then the two gradings induce a $\mathbb{Z}^2$-grading.    
\end{prop}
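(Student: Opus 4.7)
The plan is to set $R_{i,j} := R_i \cap {}_jR$ and prove that $R = \bigoplus_{(i,j)} R_{i,j}$ as a vector space. Once this decomposition is established, the bigraded algebra property $R_{i,j} \cdot R_{k,l} \subseteq R_{i+k,\,j+l}$ is immediate from both the Koszul and preprojective gradings being algebra gradings, and ${}_jR = \bigoplus_i R_{i,j}$ follows by intersecting the Koszul decomposition $R = \bigoplus_i R_i$ with ${}_jR$. So the real content of the proposition is that each Koszul component $R_i$ splits with respect to the preprojective grading: $R_i = \bigoplus_j R_{i,j}$.

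I would proceed by induction on $i$, with the base cases built directly into the definition of a cut. For $i=0$, the condition ${}_0R = \langle R_0 \oplus {}_0(R_1)\rangle$ gives $R_0 \subseteq {}_0R$, so $R_0 = R_{0,0}$. For $i=1$, the first condition in the definition of a cut is precisely $R_1 = R_{1,0} \oplus R_{1,1}$. For the inductive step, I invoke that $R_\bullet$ is in particular quadratic (being (almost) Koszul) and therefore generated in degree $1$ over $R_0$, so that $R_i = R_1 \cdot R_{i-1}$ as a subspace of $R$. Any product $r_1 \cdot s$ with $r_1 \in R_1$ and $s \in R_{i-1}$ decomposes, via the base case and induction hypothesis, into a sum of products $r_{1,a}\cdot s_b$ with $r_{1,a} \in R_{1,a}$ and $s_b \in R_{i-1,b}$. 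Each such product lies in $R_i \cap {}_{a+b}R = R_{i,a+b}$, since $R_1 \cdot R_{i-1} \subseteq R_i$ and ${}_aR \cdot {}_bR \subseteq {}_{a+b}R$. Thus $R_i = \sum_j R_{i,j}$, and the sum is direct because it sits inside the direct sum $\bigoplus_j {}_jR$.

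The only delicate point, and where I expect the main (minor) obstacle, is the implicit use of $R_i = R_1\cdot R_{i-1}$. This rests on the standard fact that an (almost) Koszul algebra is generated by $R_1$ over $R_0$; in the almost Koszul case the equality holds for $1 \leq i \leq p$, while $R_i = 0$ for $i > p$, so the induction is not disturbed by the almost-Koszul truncation. Once this is in place, everything else is formal, and we obtain the desired $\mathbb{Z}^2$-grading.
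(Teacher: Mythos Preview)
Your proof is correct and takes essentially the same approach as the paper: both establish $R_i = \bigoplus_j (R_i \cap {}_jR)$ by induction on $i$, using that an (almost) Koszul algebra is generated in degree $1$ over $R_0$, with the base cases read off from the definition of a cut. The only minor difference is that the paper also proves the symmetric decomposition ${}_jR = \bigoplus_i (R_i \cap {}_jR)$ directly from the cut conditions on ${}_0R$ and ${}_1R$, whereas you derive it from the already-established $R = \bigoplus_{i,j} R_{i,j}$; your route is a bit more economical, but note that the phrase ``by intersecting the Koszul decomposition with ${}_jR$'' is slightly glib, since a subspace of a direct sum is not in general the sum of its intersections with the summands --- here it works precisely because you have first shown $R = \bigoplus_{i,j} R_{i,j}$.
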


\begin{proof}
   We first show that each (almost) Koszul homogeneous piece $R_i$ decomposes into a direct sum of preprojectively homogeneous pieces. By assumption, the decomposition is true for $R_0$ and $R_1$. Furthermore, it is obvious that $R_i \supseteq \left( \bigoplus_{j \geq 0} R_i \cap {}_jR \right)$. Then note that $R_i$ is generated over $R_0$ by products of length $i$ of elements in $R_1$, and use distributivity to see that $R_i \subseteq \left( \bigoplus_{j \geq 0} R_i \cap {}_jR \right)$. 
   Now, we show that a preprojectively homogeneous piece ${}_i R$ decomposes into (almost) Koszul homogeneous components. The argument is the same, we note ${}_0 R = \langle R_0 \oplus {}_0(R_1) \rangle$ is generated by (almost) Koszul homogeneous elements, and the same holds for ${}_1R = ({}_0R) ({}_1(R_1)) ({}_0R)$. Then, writing each element in ${}_j R$ as a product of length $j$ of elements in ${}_1 R$ and using distributivity shows the claim. 
\end{proof}

\begin{remark} Let us note why the terminology of a cut is justified.
    \begin{enumerate}
        \item In the case of a basic algebra $R$, this recovers the usual quiver definition as used in \cite{Iyama-Oppermann, HIO14}: We can identify the vertices $Q_0$ with idempotents in $R_0$, the arrows in $Q_1$ with appropriate generators in $R_1$, and obtain a graded isomorphism $R_\bullet \simeq kQ/I$, where $kQ/I$ is graded with respect to path-length. A cut then amounts to grading the arrows of $Q$. In detail, a cut is a subset $C \subseteq Q_1$, and removing it gives a cut quiver $Q_C$. Then, we have that ${}_0R \simeq kQ_C/I_C$, where $I_C$ is the set of relations where all relations involving arrows in $C$ have been removed. Furthermore ${}_1 R$ is generated over $kQ_C/I_C$ by $kC$, up to the specified isomorphism. 
        \item When starting instead with a Koszul grading on $A$, which is either assumed to be higher representation finite or higher representation infinite, the $(n+1)$-total grading on $\Pi_{n+1}(A)$ constructed by Grant and Iyama is an (almost) Koszul grading such that the higher preprojective grading is a cut.
\end{enumerate}
\end{remark}

We make the last remark about the $(n+1)$-total grading from above precise.

\begin{prop}\label{Prop: GI grading defines a cut}
    Let $A$ be Koszul and $n$-hereditary, and $\Pi_{n+1}(A) = R_\bullet = \bigoplus_{i \geq 0} R_i$ the higher preprojective algebra with the $(n+1)$-total (almost) Koszul grading constructed by Grant-Iyama. Then the preprojective grading on $R_\bullet$ is a cut with respect to this grading. 
\end{prop}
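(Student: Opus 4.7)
The plan is to directly unpack the definition of the $(n+1)$-total grading and compare it against the two conditions defining a cut. Writing ${}_iR = \Pi_{i,\ast}$ for the preprojective grading and $R_\bullet$ for the $(n+1)$-total grading, one has
\[ R_m = \bigoplus_{(n+1)i + j = m} \Pi_{i,j}, \]
where $j$ ranges over Koszul degrees of $\Ext^n_A(D(A),A)^{\otimes_A i}$. The two key ingredients will be \Cref{Lem: E is generated in deg -n}, which says $\Ext^n_A(D(A),A)$ is generated in Koszul degree $-n$, together with the fact that the Koszul algebra $A = \Pi_{0,\ast}$ is generated in Koszul degrees $0$ and $1$.

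First, I would compute $R_1$. Since $\Ext^n_A(D(A),A)^{\otimes_A i}$ sits in Koszul degrees $\geq -ni$ (being generated by the $i$-th tensor power of something in degree $-n$), the nonzero pieces $\Pi_{i,j}$ satisfy $(n+1)i + j \geq (n+1)i - ni = i$. Solving $(n+1)i + j = 1$ with these constraints forces either $(i,j) = (0,1)$ or $(i,j) = (1,-n)$. Hence
\[ R_1 = \Pi_{0,1} \oplus \Pi_{1,-n}, \]
and setting ${}_0(R_1) := \Pi_{0,1}$ and ${}_1(R_1) := \Pi_{1,-n}$ gives the decomposition demanded by part (1) of the definition of a cut, with the inclusions ${}_i(R_1) \subseteq {}_iR$ holding on the nose.

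For part (2), I would treat the two conditions separately. Since $\Pi_{0,\ast} = A$ and $A$ is Koszul, $A$ is generated as a $k$-algebra by $A_0 \oplus A_1 = \Pi_{0,0} \oplus \Pi_{0,1} = R_0 \oplus {}_0(R_1)$, which gives ${}_0R = \langle R_0 \oplus {}_0(R_1)\rangle$. For the second condition, ${}_1R = \Pi_{1,\ast} = \Ext^n_A(D(A),A)$, viewed as an $A$-bimodule, is generated in Koszul degree $-n$ by \Cref{Lem: E is generated in deg -n}; the degree $-n$ component is exactly $\Pi_{1,-n} = {}_1(R_1)$. So ${}_1R$ is generated as an ${}_0R$-bimodule by ${}_1(R_1)$, as required.

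There is no genuine obstacle here: the proof is essentially bookkeeping with the bigrading, and both required facts are already in hand. The only care needed is to justify the lower bound $j \geq -ni$ on Koszul degrees in $\Pi_{i,\ast}$, which follows immediately from \Cref{Lem: E is generated in deg -n} by noting that tensor products of elements of Koszul degree $\geq -n$ have Koszul degree $\geq -ni$.
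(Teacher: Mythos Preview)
Your proposal is correct and follows essentially the same approach as the paper: both arguments use \Cref{Lem: E is generated in deg -n} to identify $R_1 = A_1 \oplus E_{-n}$ and then invoke Koszulity of $A$ to verify the two conditions of a cut. Your version is simply more explicit in justifying why only $(i,j) = (0,1)$ and $(1,-n)$ contribute to $R_1$, a step the paper dispatches with ``one sees easily''.
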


\begin{proof}
    Denote by $E = \operatorname{Ext}^n_A(D(A), A)$ the bimodule which generates the preprojective algebra. Then it follows from \Cref{Lem: E is generated in deg -n} that $E$ as a graded $A$-module is generated in degree $-n$. One sees easily that the Koszul degree $0$ piece is $R_0 = A_0$, and that $R_1 = A_1 \oplus E_{-n}$. Since $A$ is Koszul, it follows that $A$ is generated in degree $1$, and $E$ is generated over $A$ by $R_1 \cap E = E_{-n}$. 
\end{proof}

\begin{example}\label{Ex: Kronecker example}
    Not every higher preprojective grading is a cut. To see this, simply take a higher preprojective cut ${}_\bullet R$ on the Koszul graded algebra $R_\bullet$ and an arbitrary automorphism $\varphi \in \operatorname{Aut}(R)$. Then we obtain a new higher preprojective grading on $R$ from the higher preprojective grading on $R = \Pi_{n+1}(\varphi(A))$ and this new grading in general is not a cut. More explicitly, consider the hereditary, representation infinite algebra $A = kQ$ where $Q$ is the Kronecker quiver 
    \[\begin{tikzcd}[ampersand replacement=\&]
	e \& f
	\arrow["b"', shift right, from=1-1, to=1-2]
	\arrow["a", shift left=2, from=1-1, to=1-2]
\end{tikzcd}. \]
    Here, we already labeled the idempotents by $e$ and $f$ respectively. The (classical) preprojective algebra $R = \Pi_2(A)$ is given by adding the reverse arrows $a^\ast$ and $b^\ast$ and introducing commutativity relations.
    \[\begin{tikzcd}[ampersand replacement=\&]
	e \& f
	\arrow["b"', shift right=5, bend right =35, from=1-1, to=1-2]
	\arrow["a", shift left=5, bend left= 35, from=1-1, to=1-2]
	\arrow["{b^\ast}"', shift left=3,  bend left= 35, from=1-2, to=1-1]
	\arrow["{a^\ast }", shift right=3, bend right =35, from=1-2, to=1-1]
\end{tikzcd}\]
    Furthermore, it is graded by path-length and becomes Koszul with this grading. Then placing $\{a^\ast, b^\ast\}$ or $\{a, b\}$ in preprojective degree $1$ defines two higher preprojective cuts. Next, note that $a^2 = 0$, so $r = (1+a)$ is invertible, and conjugation by $r$ gives us a new quiver presentation for $\Pi(A)$ 
    \[\begin{tikzcd}[ampersand replacement=\&]
	e+a \& f-a
	\arrow["b"', shift right=5, bend right =35, from=1-1, to=1-2]
	\arrow["a", shift left=5, bend left= 35, from=1-1, to=1-2]
	\arrow["{(b^\ast)^r}"', shift left=3,  bend left= 35, from=1-2, to=1-1]
	\arrow["{(a^\ast)^r }", shift right=3, bend right =35, from=1-2, to=1-1]
\end{tikzcd}\]
    where the newly added arrows $a^\ast$ and $b^\ast$ get mapped to 
    \begin{align*}
        (a^\ast)^r = (1-a) a^\ast (1+a) &= a^\ast - a a^\ast + a^\ast a - a a^\ast a \\
        (b^\ast)^r = (1-a) b^\ast (1+a) &= b^\ast - a b^\ast + b^\ast a - a b^\ast a.
    \end{align*}
    In particular, the Koszul degree $1$ piece $R_1 = \langle a, b, a^\ast, b^\ast \rangle$ differs from $\langle a, b, (a^\ast)^r, (b^\ast)^r  \rangle$, but declaring $\{ a, b\}$ to have preprojective degree $1$ still defines a preprojective grading. Denote its degree $0$ part by $B = \langle e+a, f-a, (a^\ast)^r, (b^\ast)^r \rangle$. Then even the semisimple base rings differ, since $R_0 \cap B = 0$. However, we note that the graded radical for the Koszul grading is $\grad(R_\bullet) = ( a,b,a^\ast, b^\ast )$. This equals the graded radical of the newly defined higher preprojective grading, which is given by the ideal $(a, b,(a^\ast)^r, (b^\ast)^r )$. The equality is not a coincidence, as \Cref{Cor: Nilpotent generators imply equal gradicals} shows. 
\end{example}

It follows from \Cref{Cor: Koszul grading is unique} that many of the (almost) Koszul gradings we are interested in --- i.e. locally finite dimensional, and given by a connected quiver --- must also be unique up to graded isomorphism, so we can hope that this ambiguity is the only obstruction to being a cut. More formally, we ask the following. 

\begin{question} \label{Ques: Compatibility?} 
    Let $R_\bullet$ be an (almost) Koszul algebra, and suppose there exists a higher preprojective grading on $R$. Does there exist an automorphism of $R$ that maps the higher preprojective grading to a cut?
\end{question}

It was proven in \cite{HIO14} that a ring-indecomposable $n$-hereditary algebra is either $n$-representation finite or $n$-representation infinite. This is reflected in the dimension of the higher preprojective algebra being finite or infinite. We therefore split the above question into the two cases of finite and infinite as well. We will give a positive answer to this question for the $n$-representation finite case. We also prove this for basic algebras in the $n$-representation infinite case. The proof hinges on the fact that the preprojective degree $0$ part can be given a Koszul grading, which may differ from the (almost) Koszul grading on the preprojective algebra. 

\begin{remark}\label{Rem: Geometric interpretation}
    Let us give an alternative interpretation of \Cref{Ques: Compatibility?} in geometric terms. For simplicity, we restrict to the case when $A$ is $n$-representation tame. By \cite[Definition 6.10]{HIO14}, this means that $\Pi_{n+1}(A)$ is a finitely generated module over a central Noetherian subalgebra $Z \subseteq Z(\Pi_{n+1}(A))$. Note that this definition is independent of a Koszul or higher preprojective grading. A grading on $\Pi_{n+1}(A)$ induces a grading on the center $Z(\Pi_{n+1}(A))$, which is equivalent to a $k^\ast$-action on $X = \Spec(Z(\Pi_{n+1}(A)))$. This means that the Koszul and the higher preprojective gradings define two $k^\ast$-actions on $X$. In general, these actions need not commute. As with gradings, one may take two commuting actions and move one of them by an automorphism of $X$ to obtain non-commuting actions. However, if the gradings form a $\mathbb{Z}^2$-grading, then the actions do commute. From this perspective, \Cref{Ques: Compatibility?} asks whether the two given $k^\ast$-actions commute up to an automorphism of $X$.   
\end{remark}

\section{Reduction to the graded basic case}
It will be necessary to work with some `basicness' assumption. 
However, since we work with graded algebras this basicness needs to be taken with the adjective graded as well, and we explain this reduction here. 

\begin{defin}
    We call a locally finite, nonnegatively graded algebra $R_\bullet$ \emph{graded basic} if the finite dimensional algebra $R_0$ is basic.
\end{defin}

\begin{remark}
    Since graded simple $R_\bullet$ modules are, up to isomorphism and shift, given by simple $R_0$ modules, it follows that the graded simple modules of a graded basic algebra are one dimensional. 
\end{remark}

\begin{lem} \label{Lem: Gradings on semisimple are trivial}
    Let $S$ be a finite dimensional semisimple algebra. Then the following hold.
    \begin{enumerate}
        \item Any nontrivial ideal $I \trianglelefteq S$ contains a nontrivial idempotent.
        \item Any nonnegative grading on $S$ is trivial.
    \end{enumerate}
\end{lem}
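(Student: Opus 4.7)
The plan is to handle the two items in order, using Wedderburn--Artin for the first and then deducing the second from the first by a degree-argument applied to the graded ideal $S_{\geq 1}$.

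For (1), the approach is to invoke the Wedderburn--Artin decomposition. Since $S$ is finite dimensional and semisimple over the algebraically closed field $k$, we may write
\[ S \simeq \prod_{i=1}^m M_{n_i}(k). \]
Any two-sided ideal $I \trianglelefteq S$ is then of the form $\prod_{i \in J} M_{n_i}(k)$ for some subset $J \subseteq \{1, \dots, m\}$. If $I$ is nontrivial, i.e.\ nonzero, then $J$ is nonempty, and the identity element of any of the matrix factors $M_{n_i}(k)$ for $i \in J$ is a nonzero idempotent lying in $I$.

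For (2), let $S = \bigoplus_{i \geq 0} S_i$ be a nonnegative grading and consider the subspace $S_{\geq 1} = \bigoplus_{i \geq 1} S_i$. It is immediate from the grading axiom that $S_{\geq 1}$ is a two-sided ideal. The aim is to show $S_{\geq 1} = 0$. Suppose for contradiction that it is nonzero; then by (1) it contains a nonzero idempotent $e$. Write $e = \sum_{i \geq d} e_i$ with $e_i \in S_i$ and $e_d \neq 0$, where $d \geq 1$ is the minimal degree appearing. Computing $e^2$ and reading off the degree-$d$ component, one finds that this component is zero since any product $e_i e_j$ with $i, j \geq d$ lies in $S_{i+j}$ with $i + j \geq 2d > d$. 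On the other hand, the degree-$d$ component of $e^2 = e$ is $e_d \neq 0$, a contradiction. Hence $S_{\geq 1}$ contains no nonzero idempotent, so by (1) it must vanish, and the grading is trivial.

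No step here looks difficult; the only thing to be careful about is the convention for ``nontrivial'' in (1). I will read it as ``nonzero'', which is what is needed for the application in (2), and what is customary in this context. If a stricter convention (nonzero and not $1_S$) were intended, the argument for (1) still works whenever $I \neq S$ by picking a proper nonempty $J$, and in (2) the conclusion is unaffected since we only need any nonzero idempotent in $S_{\geq 1}$ to derive the contradiction.
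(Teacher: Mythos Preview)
Your proof is correct and follows essentially the same approach as the paper: Wedderburn--Artin for (1), and for (2) the observation that $S_{\geq 1}$ is an ideal which, by (1), would have to contain a nonzero idempotent if it were nonzero. The only cosmetic difference is in deriving the contradiction in (2): the paper uses finite support to conclude $S_{\geq 1}$ is nilpotent (so $e = e^{m+1} = 0$), whereas you argue directly via the lowest-degree component of $e$; both are equally valid and the underlying idea is the same.
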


\begin{proof} \leavevmode
    \begin{enumerate}
        \item Let $I \trianglelefteq S$ be a nontrivial ideal. By the Artin-Wedderburn theorem, $S \simeq \Pi_{i = 1}^m k^{n_i \times n_i}$ is a product of matrix rings over $k$. Every matrix ring $k^{n_i \times n_i}$ has only trivial ideals, so $I$ needs to contain an ideal isomorphic to some $k^{n_i \times n_i}$, and hence contains nontrivial idempotents.
        \item Consider a nonnegative grading $S = \bigoplus_{i \geq 0} S_i$. Since $S$ is finite dimensional, it follows that the grading has finite support, that means we can write $S = \bigoplus_{i = 0}^m S_i$ for some $m \geq 0$. In particular, it follows that $I = \bigoplus_{i = 1}^m S_i $ is an ideal in $S$, and furthermore that $I$ is nilpotent. From the previous argument, we know that if $I$ was nontrivial, then it would contain a nontrivial idempotent $e \in I$. However, by nilpotency of $I$ we have $e = e^{m+1} = 0$, which contradicts the assumption that $I \neq 0$. \qedhere
    \end{enumerate}
\end{proof}

Next, we consider the appropriate version of Morita equivalence for our setting. For a detailed discussion of different notions of graded Morita equivalence, we refer the reader to \cite{Abrams-Ruiz-Tomforde23}.  

\begin{prop}
    Let $R_\bullet$ be locally finite and nonnegatively graded. Let $e \in R_0$ be a full idempotent such that $eR_0e$ is basic. Then $R_\bullet^b = eRe = \bigoplus_{i \geq 0} eR_i e $ is a graded basic algebra and Morita equivalent to $R$. 
\end{prop}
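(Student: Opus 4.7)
The plan is to address the two assertions separately. The graded structure on $R^b = eRe$ is immediate from the hypothesis that $e$ lives in degree $0$, while the Morita equivalence will follow from the standard corner-ring theorem once I check that the fullness of $e$ in $R_0$ upgrades to fullness in $R$.

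For the grading, the key observation is that since $e \in R_0$, left and right multiplication by $e$ preserves homogeneous degree, so $eR_i e \subseteq R_i$ for every $i \geq 0$ and the decomposition
\[ eRe = \bigoplus_{i \geq 0} eR_i e \]
is a nonnegative $\mathbb{Z}$-grading on $R^b$. Local finite-dimensionality is inherited from $R_\bullet$ since $eR_i e$ is a subspace of $R_i$. The degree zero component is $(R^b)_0 = eR_0 e$, which is basic by hypothesis, so $R^b_\bullet$ is graded basic in the sense of the definition given above.

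For the Morita equivalence, I would appeal to the classical fact that if $e \in R$ satisfies $ReR = R$, then the functors $Re \otimes_{eRe} (-)$ and $\Hom_R(Re, -) \simeq e \cdot (-)$ provide mutually inverse equivalences between $\modu eRe$ and $\modu R$. It therefore suffices to verify $ReR = R$. Unwinding the natural meaning of \emph{full idempotent} in this setup, namely $R_0 e R_0 = R_0$ (this is how such an $e$ is typically chosen: pick one primitive idempotent from each isomorphism class of simples of $R_0$ and sum them), one has
\[ R = R \cdot R_0 \cdot R = R \cdot (R_0 e R_0) \cdot R \subseteq R e R \subseteq R, \]
which gives the desired equality.

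The only point requiring care is the interpretation of \emph{full idempotent} in the graded context; once this is pinned down as fullness in $R_0$, both claims reduce to routine manipulation, and I do not anticipate a substantial obstacle.
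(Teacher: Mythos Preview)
Your proposal is correct and follows essentially the same approach as the paper: both observe that $e$ being homogeneous of degree $0$ makes $eRe$ graded with basic degree-$0$ part, and both upgrade fullness in $R_0$ to fullness in $R$ via $1 \in R_0$ to conclude Morita equivalence. Your worry about the meaning of \emph{full} is resolved exactly as you anticipated---the paper also reads it as $R_0 e R_0 = R_0$.
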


\begin{proof}
    By assumption, $e$ is homogeneous, and hence $eRe = \bigoplus_{i \geq 0} eR_i e$ is a graded algebra. Furthermore, by assumption $eR_0e$ is basic so $R_\bullet^b$ is graded basic. Lastly, note that $R_0 e R_0 = R_0$ since we assumed that $e$ is full in $R_0$. However, since $1 \in R_0$, it follows that $ReR = R$, so $e$ is full in $R$, and hence $R$ and $eRe$ are Morita equivalent.   
\end{proof}

\begin{remark}
    Recall from classical Morita theory that $eR_0e$ is independent of the choice of $e \in R_0$. More precisely, for any full idempotent $f \in R_0$ such that $fR_0 f$ is basic, we have $e R_0 e \simeq fR_0 f$. Therefore, we simply write $R_\bullet^b$ without referring to $e$. 
\end{remark}

We show that we can recover $R_\bullet$ as a graded algebra from $R_\bullet^b$. Note that this can be seen as a graded Morita equivalence. For this we need idempotents that give rise to a particular decomposition of the regular module, which we call \emph{isotypic}: We decompose 
$$1 = e_{1,1} + e_{1,2} + \ldots + e_{1, d_1} + e_{2,1} + \ldots + e_{m, d_m} \in R_0$$ 
into pairwise orthogonal, primitive idempotents, where the indices are chosen so that $Re_{i_1,j_1} \simeq Re_{i_2, j_2}$ if and only if $i_1 = i_2$. We call $e = \sum_{i = 1}^m e_{i,1}$  the corresponding \emph{Morita idempotent}.  

\begin{prop}\label{Prop: Morita recovers grading}
    Let $R_\bullet$ be locally finite and nonnegatively graded. Let $1 = e_{1,1} + e_{1,2} + \ldots + e_{1, d_1} + e_{2,1} + \ldots + e_{m, d_m} \in R_0$ be an isotypic decomposition of $1$ in $R_0$ and $e = \sum_{i = 1}^m e_{i,1}$ the corresponding Morita idempotent. Then $\operatorname{End}_{eRe}( \bigoplus_{i = 1}^m (eRe_{i,1})^{d_i})$ is a graded algebra such that
    \[ R_\bullet \simeq  \operatorname{End}_{eRe} \left( \bigoplus_{i = 1}^m (eRe_{i,1})^{d_i} \right) \]
    as graded algebras. 
\end{prop}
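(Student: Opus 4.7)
The plan is to combine a direct sum decomposition of $eR$ with graded Morita theory. Since each $e_{i,j} \in R_0$ is a homogeneous idempotent of degree zero, the decomposition $eR = \bigoplus_{i,j} eRe_{i,j}$ is a direct sum of graded left $eRe$-submodules. In particular, $\bigoplus_{i=1}^{m} (eRe_{i,1})^{d_i}$ inherits a grading, and so does its ring of $eRe$-linear endomorphisms (by shift of degree), which justifies the first assertion of the proposition.

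The main technical input is that $eRe_{i,j_1} \simeq eRe_{i,j_2}$ as graded left $eRe$-modules whenever $j_1, j_2 \in \{1, \ldots, d_i\}$. Ungradedly, the isomorphism $Re_{i,j_1} \simeq Re_{i,j_2}$ of left $R$-modules is realized by right multiplication by elements $u \in e_{i,j_1} R e_{i,j_2}$ and $v \in e_{i,j_2} R e_{i,j_1}$ satisfying $uv = e_{i,j_1}$ and $vu = e_{i,j_2}$. Writing $u = \sum_n u_n$ and $v = \sum_n v_n$ in homogeneous components, the nonnegativity of the grading together with $uv, vu \in R_0$ forces $u_0 v_0 = e_{i,j_1}$ and $v_0 u_0 = e_{i,j_2}$. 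Right multiplication by $u_0 \in R_0$ thus gives a degree-preserving isomorphism, and assembling these for each isotype produces a graded isomorphism $eR \simeq \bigoplus_{i=1}^{m} (eRe_{i,1})^{d_i}$ of left $eRe$-modules.

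Classical Morita theory then identifies $R$ with the ring of $eRe$-linear endomorphisms of $eR$, with $r \in R$ acting by right multiplication. For a homogeneous $r \in R_n$, this endomorphism shifts degree by $n$, so the identification respects the gradings. Transporting along the isomorphism from the previous step yields the desired graded isomorphism $R_\bullet \simeq \operatorname{End}_{eRe}\left(\bigoplus_{i=1}^{m} (eRe_{i,1})^{d_i}\right)$.

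The main obstacle I anticipate is the degree-zero refinement in the middle step: one must lift the classical matching of equivalent primitive idempotents from arbitrary elements $u, v$ to elements $u_0, v_0$ of $R_0$. This relies crucially on nonnegativity of the grading, which prevents higher-degree contributions from appearing in $uv$ and $vu$. Without this input, the ungraded equivalence need not produce a degree-preserving one, and the grading on $R$ would not be recoverable from that on $R_\bullet^b$.
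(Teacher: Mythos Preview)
Your proof is correct and follows essentially the same route as the paper: both invoke the Morita identification $R \simeq \operatorname{End}_{eRe}(eR)$ as graded algebras, decompose $eR = \bigoplus_{i,j} eRe_{i,j}$, and then argue that $eRe_{i,j_1} \simeq eRe_{i,j_2}$ as \emph{graded} $eRe$-modules via intertwining elements lying in $R_0$. Your degree-zero truncation argument (extracting $u_0,v_0$ from $u,v$ using nonnegativity) is in fact a more explicit justification of precisely the step the paper summarises as ``there exist elements $E_{i,j_1,j_2}\in R_0$'' realising the isomorphism.
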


\begin{proof}
    We recall from Morita theory that we obtain a Morita equivalence between $eRe$ and $R$ given by 
    \[ eRe \simeq \operatorname{End}_R(Re) \text{ and } R \simeq \operatorname{End}_{eRe}(eR).  \]
    Note that $e$ is homogeneous of degree $0$. Therefore, $eRe$ is graded, and $eR$ is a graded $eRe$-module. This way, $\operatorname{End}_{eRe}(eR)$ becomes graded and $R_\bullet \simeq \operatorname{End}_{eRe}(eR) $ is a graded isomorphism. Next, we decompose the $eRe$-module $eR$ further as $eR = \bigoplus_{i,j} eRe_{i,j}$ according to the decomposition of $1$ in $R_0$. From the isotypic decomposition, we obtain that there exist elements $E_{i, j_1, j_2} \in R_0$ such that $ e_{i,j_1} E_{i,j_1, j_2} = e_{i, j_2}$. Since $E_{i,j_1, j_2} \in R_0$, we conclude that the $eRe$-modules $eRe_{i,j_1}$ and $eRe_{i,j_2}$ are graded isomorphic. Hence, we can collect the graded modules $\bigoplus_{j = 1}^{d_i} eRe_{i, j} \simeq (eRe_{i,1})^{d_i}$ and obtain the graded isomorphism 
    \[  R_\bullet \simeq \operatorname{End}_{eRe}(eR) \simeq \operatorname{End}_{eRe}( \bigoplus_{i = 1}^m (eRe_{i,1})^{d_i}). \qedhere \]
\end{proof}

Next, we want to transfer an additional grading on the graded algebra $R_\bullet^b$ back to $R_\bullet$. For this, we need that the two gradings form a $\mathbb{Z}^2$-grading. We keep working with the same isotypic decomposition and Morita idempotent. 

\begin{prop}
    Let $R_\bullet$ be locally finite and nonnegatively graded so that $R_0$ is semisimple. Let $R_\bullet^b = R_{\bullet, \ast}^b$ be nonnegatively $\mathbb{Z}^2$-graded so that the $\bullet$-component is the grading induced from $R_\bullet$. Then there exists a $\mathbb{Z}^2$-grading on $R = R_{\bullet, \ast}$ so that the first component is the grading $R_\bullet$. 
\end{prop}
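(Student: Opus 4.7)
The plan is to transport the second grading from $R^b_\bullet = eRe$ back to all of $R$ via the Morita equivalence of \Cref{Prop: Morita recovers grading}. Concretely, I will build a $\mathbb{Z}^2$-grading on the model $\operatorname{End}_{eRe}(\bigoplus_i (eRe_{i,1})^{d_i})$ used in that proposition, and then pull it back along the graded isomorphism there.

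First, I would verify that the primitive idempotents $e_{i,1}$ composing the Morita idempotent $e = \sum_i e_{i,1}$ are bihomogeneous of bidegree $(0,0)$ in $R^b_{\bullet,\ast}$. They live in $eR_0e = (R^b)_0$, so they are concentrated in $\bullet$-degree $0$; the $\mathbb{Z}^2$-grading therefore restricts to a nonnegative $\mathbb{Z}$-grading on $eR_0e$ in the second component only. Since $eR_0e$ is semisimple, \Cref{Lem: Gradings on semisimple are trivial} forces this restricted grading to be trivial, so each $e_{i,1}$ sits in bidegree $(0,0)$. Consequently, each projective left $eRe$-module $eRe_{i,1} = (eRe)\cdot e_{i,1}$ inherits a $\mathbb{Z}^2$-grading by setting $(eRe_{i,1})_{p,q} = (eRe)_{p,q} \cdot e_{i,1}$, and so does the finite direct sum $M := \bigoplus_{i=1}^m (eRe_{i,1})^{d_i}$.

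Next, I would equip $\operatorname{End}_{eRe}(M)$ with a $\mathbb{Z}^2$-grading whose bidegree $(a,b)$ component consists of those $eRe$-linear maps shifting bidegree by $(a,b)$. Since $M$ is finitely generated projective with bihomogeneous generators of bidegree $(0,0)$, the standard identification $\Hom_{eRe}(eRe \cdot e_{i,1}, N) \simeq e_{i,1} N$ gives an exhaustive decomposition $\operatorname{End}_{eRe}(M) = \bigoplus_{(a,b)} \operatorname{End}^{(a,b)}_{eRe}(M)$ that refines the $\bullet$-grading used in \Cref{Prop: Morita recovers grading}. Pulling this $\mathbb{Z}^2$-grading back along the graded isomorphism $R_\bullet \simeq \operatorname{End}_{eRe}(M)$ of that proposition produces the desired $\mathbb{Z}^2$-grading on $R$, whose first component is by construction $R_\bullet$.

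The hard part, I expect, is checking that the refined $\mathbb{Z}^2$-grading on $\operatorname{End}_{eRe}(M)$ really does recover, in its first component, the $\bullet$-grading from \Cref{Prop: Morita recovers grading} (rather than something strictly coarser). This boils down to the bihomogeneity of the $e_{i,1}$ secured in the first step together with the local finiteness of $M$, which together ensure that every $\bullet$-homogeneous endomorphism splits as a finite sum of bihomogeneous ones. Nonnegativity in the second component then follows automatically, since the generators of $M$ lie in bidegree $(0,0)$ and the bigrading on $eRe$ is nonnegative, so the construction actually yields a \emph{nonnegative} $\mathbb{Z}^2$-grading on $R$ extending $R_\bullet$.
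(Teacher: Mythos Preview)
Your proposal is correct and follows essentially the same approach as the paper: both arguments use \Cref{Lem: Gradings on semisimple are trivial} to show the primitive idempotents of the Morita idempotent sit in bidegree $(0,0)$, so that the projectives $eRe_{i,1}$ become bigraded, and then transport the resulting bigrading on $\operatorname{End}_{eRe}(M)$ back to $R$ via \Cref{Prop: Morita recovers grading}. Your write-up is slightly more explicit than the paper about why the first component of the induced bigrading recovers $R_\bullet$ and about nonnegativity in the second component, but the argument is the same.
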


\begin{proof}
    Let $e \in R_0$ be the Morita idempotent that defines the graded basic algebra $R_\bullet^b = eRe$. Decompose $e = e_1 + \ldots + e_m$ inside of $R_0$ into primitive idempotents. It then follows from \Cref{Prop: Morita recovers grading} that we have a graded isomorphism $R \simeq \operatorname{End}_{eRe}(\bigoplus_{i = 1}^m (eRe_i)^{d_i})$ for some multiplicities $d_i$. Since $eRe$ is graded basic, we have that $eR_0e$ is basic and finite dimensional. The second component of the bigrading $(\bullet, \ast)$ on $eRe$ defines a grading on $eR_0e$. However, it follows from \Cref{Lem: Gradings on semisimple are trivial} that this grading is trivial on $eR_0e$, and hence we have $R^b_{(0,0)} = \bigoplus_{i \geq 0} R^b_{(0, i)}$. Recall that the identity in $eRe$ is $e$, which consequently has bidegree $(0, 0)$, as do all the $e_i$. It follows that each projective $R^b_\bullet$ module $eRe_i$ is bigraded. Hence, $R \simeq \operatorname{End}_{eRe}(\bigoplus_{i = 1}^m (eRe_i)^{d_i})$ becomes $\mathbb{Z}^2$-graded as well. Finally, it follows from \Cref{Prop: Morita recovers grading} that the first component of the bigrading on $\operatorname{End}_{eRe}(\bigoplus (eRe_i)^{d_i})$ recovers the $\bullet$-grading on $R$.       
\end{proof}

Lastly, we apply this to the case of a higher preprojective cut. 

\begin{prop} \label{Prop: HPC pulls back along Morita}
    Let $R_\bullet$ be a locally finite Koszul algebra. Suppose that there exists a higher preprojective cut on $R_\bullet^b$. Then $R_\bullet^b = R^b_{\bullet, \ast}$ becomes $\mathbb{Z}^2$-graded, and the induced $\mathbb{Z}^2$-grading on $R_\bullet$ is a higher preprojective cut. 
\end{prop}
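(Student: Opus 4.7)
The plan is to chain together the results of the section. Starting from the higher preprojective cut on $R^b_\bullet$, the proposition that a cut induces a $\mathbb{Z}^2$-grading promotes it to a bigrading $R^b_{\bullet, \ast}$ whose first component is the given Koszul grading on $R^b$. Applying the previous proposition pulls this back to a $\mathbb{Z}^2$-grading $R_{\bullet, \ast}$ on $R$ whose first component agrees with $R_\bullet$. What remains is to verify that the second component is a higher preprojective cut on $R_\bullet$, where I write $R_{i,j}$ for the piece of Koszul degree $i$ and preprojective degree $j$.

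For the two cut conditions, the key tool is the graded Morita isomorphism $R \simeq \operatorname{End}_{R^b}\bigl(\bigoplus_i (R^b e_i)^{d_i}\bigr)$ from \Cref{Prop: Morita recovers grading}, which by construction becomes a bigraded isomorphism, yielding $R_{i,j} \simeq \bigoplus_{i_1, i_2} \bigl(e_{i_2} R^b_{i,j} e_{i_1}\bigr)^{\oplus d_{i_1} d_{i_2}}$. Since the cut on $R^b_\bullet$ forces $R^b_{1, j} = 0$ for $j \geq 2$, this vanishing transfers to $R_{1, j}$, establishing condition (1). For condition (2), Koszulity of $R_\bullet$ shows that $R_i$ is spanned over $R_0$ by $i$-fold products of elements of $R_1 = R_{1,0} \oplus R_{1,1}$; compatibility of the bigrading with multiplication then forces products landing in $R_{i, 0}$ to consist only of factors from $R_{1,0}$ and products landing in $R_{i, 1}$ to contain exactly one factor from $R_{1, 1}$. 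This gives ${}_0 R = \langle R_0 \oplus R_{1, 0} \rangle$ as algebras and ${}_1 R$ generated as ${}_0 R$-bimodule by $R_{1, 1}$, completing the cut conditions.

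The subtlest step, which I expect to be the main obstacle, is verifying that ${}_\ast R$ is genuinely a higher preprojective grading in the strict sense, i.e., that ${}_0 R$ is $n$-hereditary and that $R$ is isomorphic as a graded algebra to $\Pi_{n+1}({}_0 R)$ with its tensor-degree grading. My approach is to exploit Morita equivalence: restricting $R \sim R^b$ to preprojective degree $0$ yields ${}_0 R \sim {}_0 R^b$, and since both $n$-hereditariness and the construction $\Pi_{n+1}(-)$ are Morita invariant, ${}_0 R$ is $n$-hereditary with $\Pi_{n+1}({}_0 R)$ Morita equivalent to $R$. To upgrade this to an actual graded isomorphism $\Pi_{n+1}({}_0 R) \simeq R$, I would construct the canonical map from $\Pi_{n+1}({}_0 R)$ by applying the universal property of the tensor algebra to the inclusion ${}_0 R \hookrightarrow R$ together with the bimodule ${}_1 R$, and then verify it is an isomorphism by appealing to the characterization theorem of higher preprojective algebras in the $n$-representation infinite case (the Iyama--Oppermann bijection with bimodule $(n+1)$-Calabi--Yau algebras of Gorenstein parameter $1$) or an analogous finite-dimensional identification in the $n$-representation finite case.
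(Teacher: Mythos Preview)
Your approach is essentially correct and closely parallels the paper's, with one notable difference in emphasis.

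For the two cut conditions, your argument and the paper's are the same in substance: both transfer the vanishing $R^b_{1,j}=0$ for $j\geq 2$ through the bigraded Morita isomorphism, and both use Koszulity (generation in degrees $0$ and $1$) to verify the generation conditions. The paper makes one step slightly more explicit than you do: it invokes \Cref{Lem: Gradings on semisimple are trivial} to conclude $R_0 = R_{(0,0)}$, whereas you read this off from your displayed formula $R_{i,j}\simeq\bigoplus(e_{i_2}R^b_{i,j}e_{i_1})^{\oplus d_{i_1}d_{i_2}}$. Either is fine. In fact your treatment of condition~(2) is arguably more complete, since you also spell out why ${}_1R$ is generated by $R_{1,1}$ as a bimodule.

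The real divergence is in your ``subtlest step'': the paper does not reprove that the $\ast$-grading is higher preprojective but simply cites \cite[Proposition~4.12]{Thi20}, noting that its proof works in both the $n$-representation finite and infinite cases. Your sketched Morita argument---transport $n$-hereditariness along ${}_0R\sim{}_0R^b$, then identify $\Pi_{n+1}({}_0R)$ with $R$ via the tensor-algebra universal property---is precisely the content of Thibault's proposition, so you are re-deriving a result already in the literature. This buys you a self-contained proof but at the cost of the extra work you anticipated; in particular your fallback to ``an analogous finite-dimensional identification in the $n$-representation finite case'' is where you would have to be most careful, as there is no single clean characterisation theorem to invoke there. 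Citing Thibault sidesteps this entirely.
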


\begin{proof}
    The fact that the second component of the grading $R_{\bullet, \ast}$ is a higher preprojective grading follows from \cite[Proposition 4.12]{Thi20}, whose proof applies for both the $n$-representation finite and the $n$-representation infinite case. To see that $R_\ast$ is a cut for the Koszul grading $R_\bullet$, we first note that clearly $R_1 = \bigoplus_{j \geq 0} R_{(1,j)}$ decomposes into preprojectively homogeneous pieces. Furthermore, recall that any idempotent $e \in R_0$ affording the Morita equivalence with $R_\bullet^b$ is full. Therefore, it follows that $R_{(1, j)} = 0$ for all $j \geq 2$, because the same is true for the cut on $R^b_\bullet$. To check the second property of a cut, note that $R_0$ becomes a graded algebra with respect to the $\ast$-component of the bigrading. By \Cref{Lem: Gradings on semisimple are trivial}, it follows that $R_0 = R_{(0,0)}$, so we have that $R_0 \subseteq \bigoplus_{i \geq 0} R_{(i,0)}$ has preprojective degree $0$. Since $R_\bullet$ is Koszul, it is generated in degrees $0$ and $1$, and hence we have that $\bigoplus_{i \geq 0} R_{(i, 0)} $ is generated by $R_{(0,0)} \oplus R_{(1, 0)} = R_0 \oplus R_{(1, 0)}$.  
\end{proof}

Note that while this reduction works in both the $n$-representation finite and the $n$-representation infinite case, we will only need it in the $n$-representation infinite case, where the following complication occurs. 

\begin{remark}
    Suppose $R_\bullet$ and ${}_\bullet R$ are a Koszul and a higher preprojective grading respectively, so that $A = {}_0 R$ is $n$-representation infinite. We may choose $e \in R_0$ for the reduction to the graded basic algebra $R_\bullet^b$. However, in general $e \not \in {}_0 R$ need not be preprojectively homogeneous. This means that the subspaces $e ({}_i R) e$ need not define a grading on $eRe$, so we can not transfer the higher preprojective grading this way. In order to remedy this, we set $f = {}_0 e$ and in this way obtain a full, homogeneous idempotent in ${}_0 R$, which gives rise to a graded basic algebra $fRf = \bigoplus_{i \geq 0} (f ({}_i R) f)$. However, since $R$ is infinite dimensional, the reductions $eRe$ and $fRf$ need not be isomorphic. We therefore will assume that $R_0 \subseteq {}_0 R$ in \Cref{Thm: Main theorem for nonbasic nRI}, so that the idempotent $e \in R_0$ is also homogeneous with respect to the preprojective grading. As pointed out in \Cref{Ex: Kronecker example}, this assumption is not fulfilled in general. In the spirit of the Wedderburn-Malcev theorem, one can hope that $R_0$ can be moved into ${}_0 R$ by some automorphism, so we record the following question. 
\end{remark} 

\begin{question}
    Let $R_\bullet$ be locally finite and nonnegatively graded. Let $e \in R$ be some idempotent, and decompose $e = e_0 + e_+$. Then $e_0$ is idempotent. Does there exist an automorphism of $R$ that takes $e$ to $e_0$. If $\{ e_1, \ldots, e_d\}$ is a complete set of primitive orthogonal idempotents, does there exist an automorphism taking it to $\{ (e_1)_0, \ldots, (e_d)_0\}$? 
\end{question}

\section{The \texorpdfstring{$n$}{n}-representation finite case}
In this section, we present the details of the $n$-representation finite case. The general strategy here will be the same as in the $n$-representation infinite case. However, the $n$-representation finite case has the feature that the higher preprojective algebra is finite dimensional, so some subtleties from the $n$-representation infinite case do not yet appear here. Let $R = \Pi_{n+1}(A)$ be the higher preprojective algebra of an $n$-representation finite algebra $A$. Denote the higher preprojective grading on $R$ by ${}_\bullet R$. Let $R_\bullet$ be another grading on $R$ so that $R_\bullet$ is $(p, n+1)$-Koszul. We want to show that $A$ can be endowed with a Koszul grading, despite the fact that $A$ is not necessarily a graded subalgebra of $R_\bullet$. Nonetheless, the graded radicals for the almost Koszul and the higher preprojective grading coincide 

\begin{remark}
    Since $A$ is $n$-representation finite, $R = \Pi_{n+1}(A)$ is finite dimensional. It follows from \Cref{Pro: Properties of gradings} that 
    \[ \grad( R_\bullet) = \rad(R) =  \grad({}_\bullet R).\] 
    In particular, since $\grad({}_\bullet R) = \rad(A) \oplus {}_+ R$, it follows that 
    \[R_0 \simeq R/R_+ = R/\grad(R_\bullet) = R/\grad( {}_\bullet R) \simeq A/\rad(A)\] 
    carries a natural $A$-module structure, and also a graded ${}_\bullet R$-module structure.
\end{remark}

\begin{prop}\label{Pro: n-RF algebra inherits Koszul}
    Let $A$ be $n$-representation finite, and suppose there exists a $(p, n+1)$-Koszul grading $R_\bullet$ on $R = \Pi_{n+1}(A)$. Then $A$ is Koszul. 
\end{prop}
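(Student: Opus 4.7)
The plan is to verify the criterion of \Cref{Cor: FD algebera Koszul iff Yoneda is gen in 0 and 1}: since $A$ is finite-dimensional, it suffices to show that $\Ext^*_A(S, S)$ is generated as a graded algebra in cohomological degrees $0$ and $1$, where $S \simeq R_0 \simeq A/\rad(A)$. The strategy will be to build a projective $A$-resolution of $S$ from a projective $R$-resolution by restricting to the preprojective-degree-$0$ strand.

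First, by \Cref{Thm: Wedderburn-Malcev}, after applying an inner automorphism of $R$ I may assume $R_0 \subseteq A = {}_0 R$. Indeed, both $R_0$ and any Wedderburn-Malcev complement in $A$ are semisimple complements to $\rad(R) = \rad(A) \oplus {}_+ R$ inside $R$, hence conjugate by some $1+r$ with $r \in \rad(R)$; this inner automorphism alters $A$ only by isomorphism and so preserves Koszulity.

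Second, I consider the minimal preprojectively graded projective $R$-resolution $P_\bullet \to S$: each $P_k = R \otimes_{R_0} V_k$ where $V_k$ is a preprojectively graded left $R_0$-module (the preprojective grading passes to every syzygy because $S$, $\rad(R)$, and every $\rad^i(R)$ are preprojectively graded), and the $R$-linear differentials preserve preprojective degree. Because preprojective degrees are non-negative, the preprojective-degree-$0$ strand has
\[
{}_0 P_k \;=\; A \otimes_{R_0} {}_0 V_k,
\]
which is a projective $A$-module thanks to $R_0 \subseteq A$ being semisimple. A direct check using $\rad(R) = \rad(A) \oplus {}_+R$ shows $d({}_0 P_k) \subseteq \rad(A) \cdot {}_0 P_{k-1}$, and exactness of $P_\bullet \to S$ in each preprojective degree makes ${}_0 P_\bullet \to S$ the minimal projective $A$-resolution of $S$. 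Comparing $\Hom$-complexes yields
\[
\Ext^k_A(S, S) \;\cong\; {}_0 \Ext^k_R(S, S),
\]
the preprojective-degree-$0$ subspace of the Yoneda algebra of $R$.

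Third, the almost Koszul hypothesis \cite{BBK02} implies that $\bigoplus_{k \le n} \Ext^k_R(S, S)$ is generated in cohomological degrees $0$ and $1$ as a subalgebra: the additional top class sitting in cohomological degree $n+1$ does not enter in degrees $\le n$. Since preprojective degrees are non-negative, any preprojective-degree-$0$ Yoneda product can be expressed using only preprojective-degree-$0$ factors, so ${}_0 \Ext^{\le n}_R(S, S)$ is likewise generated in cohomological degrees $0$ and $1$. Combined with $n$-heredity of $A$, which forces $\Ext^k_A(S, S) = 0$ for $k > n$, this gives generation of $\Ext^*_A(S, S)$ in cohomological degrees $0$ and $1$; \Cref{Cor: FD algebera Koszul iff Yoneda is gen in 0 and 1} then yields that $A$ is Koszul. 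The main obstacle I anticipate is the careful verification that ${}_0 P_\bullet \to S$ is truly the minimal projective $A$-resolution --- this rests on the Wedderburn-Malcev alignment $R_0 \subseteq A$, non-negativity of the preprojective grading, and the decomposition $\rad(R) = \rad(A) \oplus {}_+R$.
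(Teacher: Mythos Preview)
Your argument is correct and follows essentially the same route as the paper: identify $\Ext^*_A(S,S)$ with the preprojective-degree-$0$ strand of $\Ext^*_R(S,S)$ via a minimal ${}_\bullet R$-graded resolution, then combine the almost Koszul structure of $R_\bullet$ with $\operatorname{gl.dim} A \le n$ to get generation in cohomological degrees $0$ and $1$. Two minor remarks: for a $(p,n+1)$-Koszul algebra the extra Yoneda generator sits in cohomological degree $n+2$ rather than $n+1$ (this does not affect your conclusion, which only uses degrees $\le n$), and the Wedderburn--Malcev alignment can be bypassed, since the paper simply treats $R_0 \simeq R/\grad({}_\bullet R)$ as a graded ${}_\bullet R$-module without requiring any subalgebra inclusion $R_0 \subseteq A$.
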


\begin{proof}
    Recalling the assumption of $R_\bullet$ being $(p, n+1)$-Koszul, we consider the algebra 
    \[ T = \bigoplus_{ i = 0}^{n+1} \operatorname{Ext}^i_R(R_0, R_0).  \]
    By \cite[Remark 3.12]{BBK02}, this is the quadratic dual of $R_\bullet$. Since $R$ is finite dimensional, we furthermore have that 
    \[ \operatorname{Ext}^i_R(R_0, R_0) = \bigoplus_{j \in \mathbb{Z}} \operatorname{Ext}^i_{{}_\bullet R}(R_0, R_0\langle j \rangle).  \]
    Hence, $T$ becomes $\mathbb{Z}^2$-graded, and we refer to the cohomological grading and the internal grading respectively. Taking the degree $0$ part of the internal grading, we obtain 
    \[ \operatorname{Ext}^i_{{}_\bullet R}(R_0, R_0) = \operatorname{Ext}^i_{A} (R_0, R_0).  \]
    Since $R_\bullet$ is $(p, n+1)$-Koszul, by \cite[Proposition 3.2]{BBK02} the algebra $T$ is generated in cohomological degrees $0$ and $1$, while the full Ext-algebra $\bigoplus_{ i \geq 0} \operatorname{Ext}^i_R(R_0, R_0)$ is generated by its cohomological degree $0$ and $1$ pieces together with $\operatorname{Ext}_R^{n+2}(R_0, R_0)$. However, since $A$ has global dimension $n$, it follows that $\operatorname{Ext}_A^{n+2}(R_0, R_0) = 0$, and hence $\bigoplus_{ i \geq 0} \operatorname{Ext}^i_A(R_0, R_0)$ is generated in cohomological degrees $0$ and $1$. By \Cref{Cor: FD algebera Koszul iff Yoneda is gen in 0 and 1}, it follows that $A$ is a Koszul algebra.
\end{proof}

Now we can use the $(n+1)$-total grading constructed by Grant-Iyama to show that we can find a cut on $R_\bullet$. 

\begin{thm}\label{Thm: Main thm for n-RF algebras}
    Let $R_\bullet$ be $(p, n+1)$-Koszul such that there exists a higher preprojective grading ${}_\bullet R$, where $A= {}_0 R$ is $n$-representation finite. Then there exists a higher preprojective cut on $R_\bullet$ whose degree $0$ component is isomorphic to $A$. 
\end{thm}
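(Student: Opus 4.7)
The plan is to use the Koszulity of $A$ established in Proposition~\ref{Pro: n-RF algebra inherits Koszul} to produce an almost Koszul grading on $R$ for which the given higher preprojective grading ${}_\bullet R$ is already a cut, and then to transport that cut back to $R_\bullet$ via an algebra automorphism of $R$.

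Since Proposition~\ref{Pro: n-RF algebra inherits Koszul} provides a Koszul grading $A_\bullet$ on $A = {}_0R$, I apply Theorem~\ref{Thm: GI construction} to equip the ungraded algebra $R = \Pi_{n+1}(A)$ with its $(n+1)$-total almost Koszul grading $R^{\mathrm{GI}}_\bullet$. By Proposition~\ref{Prop: GI grading defines a cut}, the higher preprojective grading ${}_\bullet R$, which is intrinsic to $\Pi_{n+1}(A)$ as the tensor-degree grading, is a cut of $R^{\mathrm{GI}}_\bullet$.

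The main step is to compare the two almost Koszul gradings $R_\bullet$ and $R^{\mathrm{GI}}_\bullet$ on the common underlying algebra $R$. By the Remark following the definition of almost Koszul, each such grading is generated in degrees $0$ and $1$, so $R_{\geq i} = \rad(R)^i = R^{\mathrm{GI}}_{\geq i}$ for every $i$. Since the radical filtration is intrinsic to the ungraded algebra, so is the associated graded $\gr R$, and the canonical projections yield graded algebra isomorphisms $R_\bullet \xrightarrow{\sim} \gr R$ and $R^{\mathrm{GI}}_\bullet \xrightarrow{\sim} \gr R$. Composing one with the inverse of the other gives a graded isomorphism $\varphi \colon R_\bullet \to R^{\mathrm{GI}}_\bullet$; explicitly, $\varphi(x)$ for $x \in R_i$ is the unique element of $R^{\mathrm{GI}}_i$ congruent to $x$ modulo $\rad(R)^{i+1}$. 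Viewed as a map of ungraded algebras, $\varphi$ is an algebra automorphism of $R$ satisfying $\varphi(R_i) = R^{\mathrm{GI}}_i$ for each $i$.

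To conclude, I define a new grading on $R$ by ${}_jR^{\mathrm{new}} := \varphi^{-1}({}_jR^{\mathrm{GI}})$. Since $\varphi^{-1}$ is an algebra automorphism, ${}_\bullet R^{\mathrm{new}}$ is again a higher preprojective grading, and its degree $0$ component $\varphi^{-1}(A)$ is isomorphic to $A$ as an algebra. The cut conditions for ${}_\bullet R$ inside $R^{\mathrm{GI}}_\bullet$, namely the decomposition of $R^{\mathrm{GI}}_1$ and the generation statements for ${}_0 R^{\mathrm{GI}}$ and ${}_1 R^{\mathrm{GI}}$, pull back through $\varphi^{-1}$ using $\varphi^{-1}(R^{\mathrm{GI}}_i) = R_i$ to the corresponding cut conditions for ${}_\bullet R^{\mathrm{new}}$ inside $R_\bullet$. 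The main obstacle I anticipate is producing the automorphism $\varphi$: it is a graded version of a Wedderburn--Malcev type uniqueness statement, and it relies essentially on the intrinsic nature of the radical filtration for a finite dimensional algebra. This is precisely the feature that fails for $n$-representation infinite algebras and will force a different approach in that setting.
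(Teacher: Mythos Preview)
Your proof is correct and follows essentially the same approach as the paper: show $A$ is Koszul via Proposition~\ref{Pro: n-RF algebra inherits Koszul}, invoke the Grant--Iyama $(n+1)$-total grading so that the preprojective grading is a cut, and then use that any almost Koszul grading on a finite dimensional algebra is graded isomorphic to the associated graded of the radical filtration to transport the cut to $R_\bullet$. You spell out the construction of the automorphism $\varphi$ via the associated graded more explicitly than the paper does (and introduce the harmless notational redundancy ${}_jR^{\mathrm{GI}}$ for what is just ${}_jR$), but the argument is the same.
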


\begin{proof}
    Using \Cref{Pro: n-RF algebra inherits Koszul}, we obtain that $A$ is Koszul. Therefore, we can compute the $(n+1)$-total grading $\Pi_{n+1}(A)_\bullet$ on the higher preprojective algebra $\Pi_{n+1}(A)$. The higher preprojective grading then is a cut with respect to the $(n+1)$-total grading by \Cref{Prop: GI grading defines a cut}. Note that $R \simeq \Pi_{n+1}(A)$ and that both rings are equipped with a $(p, n+1)$-Koszul grading. Since $R$ is finite dimensional, it follows that $R_\bullet \simeq \bigoplus_{i \geq 0} \rad(R)^i/\rad(R)^{i+1}$, and therefore $R_\bullet$ and $\Pi_{n+1}(A)_\bullet$ are graded isomorphic. Hence, transporting the higher preprojective grading on $\Pi_{n+1}(A)$ along this isomorphism, we obtain a higher preprojective cut on $R_\bullet$, and its degree $0$ part is isomorphic to $A$.  
\end{proof}

\section{The \texorpdfstring{$n$}{n}-representation infinite case}
In this section, we discuss the situation where $R_\bullet$ is a Koszul algebra, which is equipped with a higher preprojective grading ${}_\bullet R = \Pi_{n+1}(A)$ such that $A$ is an $n$-representation infinite algebra. We follow the same strategy as for the $n$-representation finite case. However, before computing some $\operatorname{Ext}^\ast(S,S)$ algebra, we need to argue that we can choose the same set of graded simple modules for both gradings. 

\subsection{Graded radicals and Graded Simples}
The purpose of this section is to discuss the graded simples of ${}_\bullet R$ and $R_\bullet$. We first show that ${}_\bullet R$ and $R_\bullet$ have isomorphic semisimple base rings in degree $0$. However, we do not obtain that the graded simples coincide as ungraded modules, so we will need additional assumptions. 

We start with the following trivial remark. 
\begin{remark}
    Let $A$ and $B$ be finite dimensional algebras. If there exist injective algebra morphisms $A \to B$ and $B \to A$, then $A \simeq B$. 
\end{remark}

\begin{prop}
    Let $R_\bullet$ and ${}_\bullet R$ be two \emph{arbitrary}, nonnegative gradings on an algebra $R$, and write $A = R_0$ and $B = {}_0 R$. If both gradings are locally finite dimensional, then $\topp(A) \simeq \topp(B)$. 
\end{prop}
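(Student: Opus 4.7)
The plan is to apply the remark immediately preceding the proposition: once I produce injective algebra morphisms $\topp(A) \hookrightarrow \topp(B)$ and $\topp(B) \hookrightarrow \topp(A)$, the isomorphism follows from the fact that both tops are finite dimensional (as $A$ and $B$ are, by local finite dimensionality of the two gradings).

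To construct the first injection, I would use the Wedderburn--Malcev theorem to fix a semisimple complement $S_A \subseteq A$ with $A = S_A \oplus \rad(A)$, so that $S_A \simeq \topp(A)$ as algebras. I then form the composition
\[ \phi \colon S_A \hookrightarrow R \twoheadrightarrow R/\grad({}_\bullet R) \simeq \topp(B), \]
where the final isomorphism comes from \Cref{Pro: Properties of gradings}(2)--(3) applied to the ${}_\bullet R$-grading. The kernel of $\phi$ is the ideal $S_A \cap \grad({}_\bullet R)$ of the semisimple algebra $S_A$. By \Cref{Lem: Gradings on semisimple are trivial}(1), to conclude that this kernel vanishes it suffices to show that it contains no nonzero idempotent.

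The main work will be this idempotent argument, which pivots on the interplay between the two gradings. Using the identification $\grad({}_\bullet R) = \rad(B) \oplus {}_+R$ together with the ${}_\bullet R$-homogeneous vector space decomposition $R = B \oplus {}_+R$, I would write any putative idempotent $a$ in the kernel as $a = a_B + a_+$ with $a_B \in \rad(B)$ and $a_+ \in {}_+R$. Since ${}_+R$ is a two-sided ideal, comparing $B$-components of $a^2 = a$ forces $a_B = a_B^2$, and the nilpotency of $\rad(B)$ then gives $a_B = 0$. This leaves $a = a_+$ as an idempotent in ${}_+R$. Expanding $a_+ = \sum_{i \geq 1} a_i$ into ${}_\bullet R$-homogeneous components and comparing the lowest-degree terms of $a_+^2 = a_+$ yields a contradiction: the minimal degree $m \geq 1$ with $a_m \neq 0$ would have to satisfy $m = 2m$. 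Hence $\phi$ is injective. Swapping the roles of the two gradings symmetrically produces an injection $\topp(B) \hookrightarrow \topp(A)$, and the preceding remark finishes the proof. The only non-routine step is the idempotent-killing argument bridging the two gradings; everything else is bookkeeping.
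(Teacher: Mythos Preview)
Your proposal is correct and follows essentially the same approach as the paper: lift $\topp(A)$ via Wedderburn--Malcev, map to $R/\grad({}_\bullet R)$, and show the kernel contains no nonzero idempotent by first killing the ${}_0 R$-component via nilpotency of $\rad(B)$ and then arguing that ${}_+ R$ contains no nonzero idempotent. The only cosmetic difference is that the paper uses $e = e^n$ for large $n$ (so that $({}_0 e)^n = 0$ directly, and then $e = e^n \in {}_{\geq n} R$ for all $n$ forces $e = 0$), whereas you use $a = a^2$ together with a minimal-degree comparison; both arguments are equivalent and equally short.
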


\begin{proof}
    We write $I = \grad(R_\bullet) $ and $J = \grad({}_\bullet R)$.
    By the Wedderburn-Malcev theorem, we can choose semisimple embeddings $S\leq A$ and $S' \leq B$ of the algebras $\topp(A)$ and $\topp(B)$. Then we consider the composition 
    \[ \varphi \colon S \to R \to R/J = \topp(B). \]
    The first map is an embedding and therefore injective. We claim that the composition is also injective. Assume to the contrary that it is not. Then $\ker(\varphi)$ is an ideal in $S$, and since $S$ is semisimple it follows from \Cref{Lem: Gradings on semisimple are trivial} that $\ker(\varphi)$ contains an idempotent $e$. Thus, we have that $e \in J$. We decompose $e$ with respect to the second grading as $e = {}_0 e + {}_+ e$ where ${}_0 e \in \rad(B)$ and ${}_+ e \in {}_+ R$. Since $B$ is finite dimensional, $\rad(B)$ is nilpotent so we find that 
    \[ {}_0 e + {}_+ e = e = e^n = ({}_0 e)^n + y = 0 + y  \]
    for some large enough $n \in \mathbb{N}$ and $y \in {}_+ R$, and hence we see that $e = {}_+ e$. Note that $e^n \in {}_{\geq n} R$ for all $n$. Since $S$ is finite dimensional, we can not have $S \cap {}_{\geq n} R \neq 0$ for all $n$, so there exists some $n$ with $ e = e^n = ({}_+ e)^n = 0$, and the claim follows. 
    Repeating the argument for $S'$, we see that $S$ embeds into $S'$ and $S'$ embeds into $S$, and hence we have $\topp(A) \simeq S \simeq S' \simeq  \topp(B)$. 
\end{proof}

\begin{cor}
    With the above assumptions, we have an isomorphism of algebras 
    \[ R_0 \simeq {}_0 R/\rad({}_0 R). \] 
\end{cor}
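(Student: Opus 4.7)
The plan is to observe that this corollary is essentially an immediate consequence of the preceding proposition, once we make explicit the Koszulity hypothesis on $R_\bullet$ that is in force throughout this section. Recall that in the section introduction, $R_\bullet$ is assumed to be Koszul, so by definition of a Koszul algebra the degree $0$ component $R_0$ is semisimple. In particular, $\rad(R_0) = 0$, which gives the trivial identification $R_0 = R_0/\rad(R_0) = \topp(R_0) = \topp(A)$.

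Next I would invoke the previous proposition with $A = R_0$ and $B = {}_0R$, both gradings being locally finite dimensional (the Koszul grading by our blanket assumption on gradings, and the higher preprojective grading since $A$ is assumed to be an $n$-representation infinite algebra, so that $\Pi_{n+1}(A)$ is locally finite dimensional). This yields a $k$-algebra isomorphism $\topp(A) \simeq \topp(B) = {}_0R/\rad({}_0R)$. Composing with the identification above gives the chain
\[ R_0 = \topp(R_0) = \topp(A) \simeq \topp(B) = {}_0R/\rad({}_0R), \]
which is the desired isomorphism of algebras.

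There is no real obstacle: the only subtle point is making sure that the phrase ``above assumptions'' in the corollary statement is interpreted to include the Koszulity of $R_\bullet$ coming from the section hypothesis, since without it the proposition alone only yields $\topp(R_0) \simeq \topp({}_0R)$, which is strictly weaker than the stated conclusion $R_0 \simeq {}_0R/\rad({}_0R)$. Once the semisimplicity of $R_0$ is noted, the rest is just bookkeeping.
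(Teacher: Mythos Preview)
Your argument is correct and matches the paper's intent: the corollary is stated without proof precisely because it follows immediately from the preceding proposition together with the semisimplicity of $R_0$ coming from the Koszul hypothesis on $R_\bullet$, exactly as you spell out. One small notational wrinkle worth cleaning up: you use $A$ both for $R_0$ (the proposition's local notation) and for the $n$-representation infinite algebra ${}_0R$ (the section's global notation) in the same paragraph; the paper itself overloads $A$ this way, but in your write-up it would be clearer to reserve $A$ for ${}_0R$ and simply apply the proposition with the pair $(R_0, {}_0R)$.
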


However, it does not follow immediately that the graded simples of $R_\bullet$ and ${}_\bullet R$ coincide as ungraded modules. For this, we need the extra assumption that the graded radicals coincide.  

\begin{prop}\label{Prop: Graded simples coincide if graded radicals do}
    Let $R_\bullet$ and ${}_\bullet R$ be nonnegative gradings on a ring $R$. If $\grad(R_\bullet) = \grad({}_\bullet R)$, then the graded simple modules of $R_\bullet$ and ${}_\bullet R$ coincide as ungraded modules.
\end{prop}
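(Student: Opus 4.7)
The plan is to exploit \Cref{Pro: Properties of gradings}(3), which identifies the graded simple modules of a nonnegatively graded ring with the simple modules of the quotient by the graded radical. Set $J = \grad(R_\bullet) = \grad({}_\bullet R)$. The first observation is that, although $J$ is defined via graded-module-theoretic data for two a priori different gradings, as an ideal of the underlying ring $R$ it is the same object on both sides. Hence the two quotient rings $R_\bullet/\grad(R_\bullet)$ and ${}_\bullet R/\grad({}_\bullet R)$ coincide with the single ungraded ring $R/J$.

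Next, I would apply \Cref{Pro: Properties of gradings}(3) to each grading separately. For $R_\bullet$, every graded simple module is, up to graded isomorphism and shift, a simple module over $R_0/\rad(R_0) \cong R/J$; the $R$-action on such a module factors through the quotient map $R \twoheadrightarrow R/J$, so as an ungraded $R$-module it is uniquely determined by its class as a simple $R/J$-module. The same reasoning applied to ${}_\bullet R$ shows that its graded simples, forgotten to ungraded $R$-modules, are also precisely the simple $R/J$-modules pulled back along $R \twoheadrightarrow R/J$. Since the pulled-back simple $R/J$-modules form one and the same set of isomorphism classes of ungraded $R$-modules, the two sets of graded simples coincide as ungraded modules.

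The only point requiring a little care is the compatibility of the algebra isomorphisms used to identify graded simples with $R_0/\rad(R_0)$-modules: for $R_\bullet$ this identification uses the composition $R_0 \hookrightarrow R \twoheadrightarrow R/J$, while for ${}_\bullet R$ it uses ${}_0 R \hookrightarrow R \twoheadrightarrow R/J$. Both compositions land in the same ring $R/J$, and both are surjective with kernel the Jacobson radical of the respective degree-$0$ piece (by \Cref{Pro: Properties of gradings}(1)--(2)). So the simple modules over $R/J$, viewed as ungraded $R$-modules, are precisely the graded simples on both sides. This gives the desired equality of ungraded $R$-module classes. The argument is essentially a bookkeeping one; no real obstacle is expected beyond confirming that each graded simple is annihilated by $J$, which is immediate from \Cref{Pro: Properties of gradings}(3).
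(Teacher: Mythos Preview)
Your proposal is correct and follows essentially the same approach as the paper: both arguments use \Cref{Pro: Properties of gradings} to identify the graded simples of each grading, as ungraded $R$-modules, with the simple modules of the common quotient $R/J$ where $J = \grad(R_\bullet) = \grad({}_\bullet R)$. The paper compresses this into the single displayed equality $\topp(R_0) = R_\bullet/\grad(R_\bullet) = {}_\bullet R/\grad({}_\bullet R) = \topp({}_0 R)$, but your more explicit unpacking of the identifications is sound and matches the intended reasoning.
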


\begin{proof}
    Since both gradings are nonnegative, it follows that a graded simple module is concentrated in a single degree, where it is a simple module over the degree $0$ part. That means a sum of representatives of the graded simple modules as ungraded modules is given by 
    \[ \topp(R_0) = R_\bullet / \grad(R_\bullet) = {}_\bullet R / \grad({}_\bullet R) = \topp({}_0 R). \qedhere \]
\end{proof}

\subsection{The graded radicals}
In light of \Cref{Prop: Graded simples coincide if graded radicals do}, we now want to investigate under which assumptions the graded radicals coincide. This is particularly nice when we are working with graded basic algebras, so we assume that $S = R_0$ and $A = {}_0 R$ are basic. 

\begin{prop} \label{Prop: Nilpotent element has radical degree 0}
    Let $R_\bullet$ be an arbitrary, locally finite grading so that $R_0$ is basic. If $r \in R$ is nilpotent, then $r_0 \in \rad(R_0)$.  
\end{prop}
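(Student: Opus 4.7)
The plan is to extract information about $r_0$ from the nilpotency of $r$ by looking at degree $0$ components, and then use the basic assumption to conclude.

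First I would decompose $r = r_0 + r_+$, where $r_0 \in R_0$ and $r_+ \in R_{\geq 1}$, using that the grading is locally finite so this is a finite sum of homogeneous components. Since $r$ is nilpotent, we have $r^N = 0$ for some $N \geq 1$. Expanding this product and collecting homogeneous components with respect to the nonnegative grading $R_\bullet$, the degree $0$ part of $r^N$ is precisely $r_0^N$, since every other product of the $r_i$'s lands in $R_{\geq 1}$. Hence $r_0^N = 0$, so $r_0$ is a nilpotent element of the finite dimensional algebra $R_0$.

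Next I would use the basic assumption together with our standing hypothesis that $k$ is algebraically closed. Since $R_0$ is basic and finite dimensional over $k = \bar k$, the Wedderburn decomposition of the semisimple quotient gives $R_0/\rad(R_0) \simeq k^m$ for some $m \geq 0$. The image of $r_0$ in $R_0/\rad(R_0)$ is nilpotent, but a product of fields contains no nonzero nilpotent elements, so the image must be zero. This forces $r_0 \in \rad(R_0)$, which is exactly the desired conclusion.

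There is no real obstacle here; the statement follows immediately from two observations that are forced by the hypotheses (nonnegativity of the grading, and basicness of $R_0$ over an algebraically closed field). The only thing to be careful about is invoking local finiteness so that $R_0$ is finite dimensional and the Jacobson radical behaves as expected, and to note that the identity $(r^N)_0 = r_0^N$ relies on the grading being \emph{nonnegative}, so that no cancellations between positive and negative degree terms can contribute to the degree $0$ component.
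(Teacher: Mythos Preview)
Your proof is correct and follows essentially the same approach as the paper: first deduce that $r_0$ is nilpotent by reading off the degree $0$ component of $r^N = 0$, then use that $R_0/\rad(R_0) \simeq k^m$ has no nonzero nilpotents. The only cosmetic difference is that the paper phrases the second step via a Wedderburn--Malcev lift $R_0 = S \oplus \rad(R_0)$ rather than passing to the quotient, but the content is identical.
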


\begin{proof}
    Decompose $r = r_0 + r_+$ with respect to the grading. It suffices to note that if $r$ is nilpotent, then so is $r_0$. To see this, simply note that 
    \[ 0 = r^n = (r_0 + r_+)^n = r_0^n + x \]
    for some $x \in R_+$. Hence we have $r_0^n = 0$. Next, suppose that $r_0 \not \in \rad(R_0)$. Then we choose a Wedderburn-Malcev decomposition $R_0 = S \oplus \rad(R_0)$, and decompose $r_0 = s + y$ for some $s \in S$ and $y \in \rad(R_0)$. Repeating the same argument, we see that $s$ is nilpotent, but since we assumed that $R_0$ is basic it follows that $s= 0$ and hence $r_0 \in \rad(R_0)$.
\end{proof}

\begin{cor}\label{Cor: Nilpotent generators imply equal gradicals}
    Let $R_\bullet$ and ${}_\bullet R$ be Koszul respectively preprojective gradings as before. Assume that $R_0$ and ${}_0 R$ are basic. If $R_1$ is generated over $R_0$ by nilpotent elements, then $\grad(R_\bullet) \subseteq \grad({}_\bullet R)$, and if ${}_1 R$ is generated over $A = {}_0 R$ by nilpotent elements, then $\grad(R_\bullet) \supseteq \grad({}_\bullet R)$.
\end{cor}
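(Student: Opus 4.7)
The plan is to invoke Proposition 4.8 applied to the \emph{other} grading in each direction, and combine it with the fact that both $\grad(R_\bullet)$ and $\grad({}_\bullet R)$ are determined as ideals by their degree~$0$ and degree~$1$ parts.

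First I would recall from \Cref{Pro: Properties of gradings} that $\grad(R_\bullet) = \rad(R_0) \oplus R_+$ and $\grad({}_\bullet R) = \rad({}_0 R) \oplus {}_+ R$. Since $R_\bullet$ is Koszul, $R$ is generated by $R_0 \oplus R_1$, so the ideal $\grad(R_\bullet)$ is generated by $\rad(R_0) \cup R_1$ as a two-sided ideal of $R$; analogously, since ${}_\bullet R$ is the tensor algebra $T_A \Ext^n_A(DA,A)$, it is generated by ${}_0 R \oplus {}_1 R$, and $\grad({}_\bullet R)$ is generated by $\rad({}_0 R) \cup {}_1 R$.

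For the first inclusion, suppose $R_1$ is generated as an $R_0$-bimodule by nilpotent elements. Since $A = {}_0 R$ is basic, \Cref{Prop: Nilpotent element has radical degree 0} applies to the grading ${}_\bullet R$: any nilpotent $x \in R$ has preprojective degree-$0$ component ${}_0 x \in \rad(A)$, and therefore $x = {}_0 x + {}_+ x \in \rad(A) + {}_+ R = \grad({}_\bullet R)$. In particular, every nilpotent element of $R$ lies in $\grad({}_\bullet R)$. Since $R_0$ is finite dimensional, elements of $\rad(R_0)$ are nilpotent, so $\rad(R_0) \subseteq \grad({}_\bullet R)$. By the generation hypothesis, the nilpotent generators of $R_1$ all lie in $\grad({}_\bullet R)$, and since $\grad({}_\bullet R)$ is an $R_0$-subbimodule of $R$, we obtain $R_1 \subseteq \grad({}_\bullet R)$. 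Combining these two facts with the generation of $\grad(R_\bullet)$ as an ideal, we conclude $\grad(R_\bullet) \subseteq \grad({}_\bullet R)$.

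The reverse inclusion is proved by the symmetric argument: under the assumption that ${}_1 R$ is generated over $A$ by nilpotent elements, apply \Cref{Prop: Nilpotent element has radical degree 0} to the Koszul grading $R_\bullet$ (whose degree $0$ part $R_0$ is basic by assumption) to place all nilpotents, and hence all of $\rad({}_0 R) \cup {}_1 R$, inside $\grad(R_\bullet)$.

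I do not anticipate a real obstacle here; the only point that needs a little care is that the Koszul and preprojective gradings both satisfy the ``generated in degrees $0$ and $1$'' property, so that each graded radical is the two-sided ideal generated by its restrictions to degree $0$ and degree $1$. Everything else is a direct application of \Cref{Prop: Nilpotent element has radical degree 0} to whichever grading has a basic degree-$0$ part, which by hypothesis is both.
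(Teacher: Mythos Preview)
Your proposal is correct and follows essentially the same approach as the paper: both argue that each graded radical is generated as an ideal by elements of degree $\leq 1$, apply \Cref{Prop: Nilpotent element has radical degree 0} to the \emph{other} grading to place the nilpotent generators inside the target graded radical, and then use that graded radicals are ideals to conclude. Your inclusion of $\rad(R_0)$ among the generators of $\grad(R_\bullet)$ is harmless extra generality, since $R_0$ is semisimple in the Koszul case and hence $\rad(R_0)=0$; the paper accordingly just says $\grad(R_\bullet)$ is generated by $R_1$.
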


\begin{proof}
    Note that $R$ is generated in degrees $0$ and $1$ for both gradings. Furthermore, $\grad(R_\bullet)$ is generated as an ideal by $R_1$, so if it is generated by a set of nilpotent elements, we obtain from \Cref{Prop: Nilpotent element has radical degree 0} that this set lies in $\rad(A) \oplus {}_+ R = \grad({}_\bullet R)$, and hence we have $\grad(R_\bullet) \subseteq \grad({}_\bullet R)$. Similarly, $\grad({}_\bullet R)$ is generated as an ideal by $\rad(A) \oplus {}_1 R$. Since $A$ is finite dimensional, $\rad(A)$ is generated by a set of nilpotent elements, and by assumption this extends to a generating set of nilpotent elements for $\grad({}_\bullet R)$, so \Cref{Prop: Nilpotent element has radical degree 0} shows $\grad(R_\bullet) \supseteq \grad({}_\bullet R)$.
\end{proof}

\subsection{Nilpotency of generators}\label{SSec: Nilpotency of Generators}
In this section, we assume that ${}_\bullet R$ is a higher preprojective grading so that ${}_0 R$ is $n$-representation infinite. We show that the assumption on ${}_1 R$ being generated by nilpotent elements is fulfilled if we assume that the Gabriel quiver of ${}_0 R = A$ is acyclic. The question whether the Gabriel quiver of an $n$-hereditary algebra is necessarily acyclic has been raised in \cite[Question 5.9]{HIO14} and remains, to the authors' knowledge, open. 

We recall that we may interpret ${}_1R$ as 
\[ \Ext^n_A(D(A), A) \simeq \Hom_A(A, \tau_n^- (A)), \]
so we analyse the structure of this space as an $A$-bimodule. This space can be decomposed according to the decomposition of $A = \bigoplus_{i} P_i$ into indecomposable projectives, so we write
\[ \Hom_A(A, \tau_n^-(A)) = \bigoplus_{i,j}\Hom_A(P_i, \tau_n^-(P_j)). \]
Clearly, any element $f \in \Hom_A(P_i, \tau_n^-(P_j))$ for \emph{non-isomorphic} $P_i \not \simeq P_j$ squares to $0$ in the preprojective algebra, since that square is just the composition 
    \[ P_i \xrightarrow[]{\begin{bmatrix} f \\ 0    \end{bmatrix}} \tau_n^-(P_j) \oplus \tau_n^-(P_i) \xrightarrow[]{\begin{bmatrix}
        0 & \tau_n^-(f)   \end{bmatrix}} \tau_n^{-2}(P_j).   \]
Thus, it remains to deal with morphisms $f \in \Hom_A(P_i, \tau_n^-(P_i))$. 

\begin{remark}
By \cite[Theorem 4.24]{HIO14}, there exists an $n$-almost split sequence 
\[ 0 \to P \xrightarrow[]{f_n} C_n \xrightarrow[]{f_{n-1}} \cdots \xrightarrow[]{f_1} C_1 \xrightarrow[]{f_0} \tau_n^-(P) \to 0  \]
in the additive closure $\mathcal{P} = \operatorname{add}\{ \tau_n^{-i}(A) \mid i \geq 0 \}$. 
By \cite[Proposition 2.3 (b)]{HIO14} this additive closure is \emph{directed} in the sense that 
$$\Hom_{\mathcal{P}}(\tau_n^{-j}(A), \tau_n^{-i}(A)) = 0 $$ for $j > i$. In the following, we write $C_{n+1} = P$ and $C_0 = \tau_n^-(P)$ for convenience.  
\end{remark}

Thus, the terms $C_i$ in the $n$-almost split sequence consist of projective summands or iterated inverse translates of projectives. We first show that the appearance of a indecomposable projective summand $Q$ in $C_i$ means that there is a path in the Gabriel quiver of $A$ from the vertex supporting $P$ to the one supporting $Q$.

\begin{lem}
    Let $Q_i$ be an indecomposable projective summand of $C_i$ for some $1 \leq i \leq n$. Then there exist indecomposable projective summands $Q_j$ of $C_j$ for $i < j \leq n+1$ such that every $f_{j-1} $ induces a non-zero morphism $ Q_j \to Q_{j-1}$. Furthermore, $Q_{n+1} = P$.  
\end{lem}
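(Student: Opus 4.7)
The plan is to construct the desired chain by upward induction on $j$, starting from $j = i+1$ and terminating at $j = n+1$. The base case $j = n+1$ is forced: since $C_{n+1} = P$ is indecomposable projective, we must set $Q_{n+1} = P$, giving the final ``Furthermore'' clause automatically.

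For the inductive step, assume $Q_{j}$ is an indecomposable projective summand of $C_{j}$ for some $i \leq j \leq n$, and decompose $C_{j+1} = P^{(j+1)} \oplus T^{(j+1)}$ into its maximal projective summand and the rest (which is a sum of terms of the form $\tau_{n}^{-a}(A')$ with $a \geq 1$). Let $\pi_{Q_{j}} \colon C_{j} \twoheadrightarrow Q_{j}$ denote the projection corresponding to a fixed decomposition $C_{j} = Q_{j} \oplus C_{j}'$. By the directedness of $\mathcal{P}$ recalled in the preceding remark, $\Hom_{\mathcal{P}}(T^{(j+1)}, Q_{j}) = 0$, so $\pi_{Q_{j}} \circ f_{j}$ factors through $P^{(j+1)}$. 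Provided this composition is nonzero, Krull--Schmidt lets us choose an indecomposable summand $Q_{j+1}$ of $P^{(j+1)}$ onto which a nonzero component of $\pi_{Q_{j}} \circ f_{j}$ projects; since $Q_{j+1}$ is then projective, this completes the inductive step.

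It therefore suffices to show that $\pi_{Q_{j}} \circ f_{j} \neq 0$. Suppose otherwise; then $\operatorname{image}(f_{j}) \subseteq \ker(\pi_{Q_{j}}) = C_{j}'$, and exactness of the $n$-almost split sequence gives $\ker(f_{j-1}) = \operatorname{image}(f_{j}) \subseteq C_{j}'$. It follows that the restriction $f_{j-1}|_{Q_{j}} \colon Q_{j} \to C_{j-1}$ is injective, with image landing in the maximal projective summand $P^{(j-1)}$ of $C_{j-1}$ by directedness. The intended contradiction comes from the minimality of the $n$-almost split sequence guaranteed by \cite[Theorem 4.24]{HIO14}, which forces the differentials $f_{k}$ to lie in the Jacobson radical of the additive category $\mathcal{P}$: an indecomposable projective summand $Q_{j}$ cannot admit a split monomorphism into $C_{j-1}$ via $f_{j-1}$, and injectivity of $f_{j-1}|_{Q_{j}}$ combined with such a splitting would allow us to peel off a trivial sub-complex $0 \to Q_{j} \xrightarrow{\sim} f_{j-1}(Q_{j}) \to 0$ in positions $j$ and $j-1$, yielding a strictly shorter exact sequence in $\mathcal{P}$ with the same endpoints $P$ and $\tau_{n}^{-}(P)$ — contradicting the characterisation of the $n$-almost split sequence as the unique minimal such sequence.

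The main obstacle is rigorously executing the last splitting-off argument: one must produce a retraction $C_{j-1} \twoheadrightarrow Q_{j}$ of $f_{j-1}|_{Q_{j}}$ compatible with the $n$-almost split structure, so that the trivial piece $0 \to Q_{j} \xrightarrow{\sim} Q_{j} \to 0$ becomes a genuine direct summand of the whole sequence. This is the heart of the proof; once carried out, iterating the inductive step produces the chain $Q_{i+1}, Q_{i+2}, \ldots, Q_{n}, Q_{n+1} = P$ together with the required nonzero maps $Q_{j} \to Q_{j-1}$ for $j = i+1, \ldots, n+1$.
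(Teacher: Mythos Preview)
Your inductive framework matches the paper's, and the observation that directedness forces any nonzero component of $\pi_{Q_j}\circ f_j$ to originate from a \emph{projective} summand of $C_{j+1}$ is correct and is used in the paper too. The gap lies in your argument that $\pi_{Q_j}\circ f_j \neq 0$.

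First, the directedness claim in your splitting attempt points the wrong way. Directedness of $\mathcal{P}$ says $\Hom_{\mathcal{P}}(\tau_n^{-b}(A),\tau_n^{-a}(A))=0$ for $b>a$; it gives no vanishing for maps \emph{from} a projective (position $b=0$) \emph{into} a translate $\tau_n^{-a}(X)$ with $a\geq 1$. Such spaces are typically nonzero --- they are precisely the higher preprojective pieces ${}_aR$. So there is no reason for the image of $f_{j-1}|_{Q_j}$ to land in $P^{(j-1)}$.

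Second, even granting injectivity of $f_{j-1}|_{Q_j}$, you have not produced a retraction, and you yourself flag this as the ``main obstacle''. An injective module map out of a projective need not split, so the peeling-off step does not go through without further input; minimality of the sequence only tells you the maps are radical, which is the thing you are trying to contradict, not a tool to manufacture a splitting.

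The paper sidesteps this entirely by using the defining property of an $n$-almost split sequence rather than bare exactness of the underlying complex: applying $\Hom_{\mathcal{P}}(-,Q_j)$ to the segment $C_{j+1}\xrightarrow{f_j} C_j\xrightarrow{f_{j-1}} C_{j-1}$ yields an exact sequence. If $\pi_{Q_j}\circ f_j=0$, then $\pi_{Q_j}\in\operatorname{Ker}(-\circ f_j)=\operatorname{Im}(-\circ f_{j-1})$, so $\pi_{Q_j}=g\circ f_{j-1}$ for some $g$. But $\pi_{Q_j}$ is a split epimorphism while $f_{j-1}$ is a radical morphism --- contradiction. This one-line argument replaces your unfinished splitting step.
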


\begin{proof}
    The proof is inductive in nature. We denote by $\pi_i \colon C_i \to Q_i$ the projection. We claim that $\pi_i \circ f_i \neq 0$. Indeed if $\pi_i \circ f_i = 0$, consider the part of the $n$-almost split sequence 
    \[
    \begin{tikzcd}
C_{i+1} \arrow[r, "f_i"] & C_i \arrow[r, "f_{i-1}"] & C_{i-1}
\end{tikzcd}
    \]
    and apply $\Hom_{\mathcal{P}}(-, Q_i)$. We obtain 
    \[
    \begin{tikzcd}
{\Hom_{\mathcal{P}}(C_{i+1}, Q_i)} & {\Hom_{\mathcal{P}}(C_{i}, Q_i)} \arrow[l, " - \circ f_i"']    & {\Hom_{\mathcal{P}}(C_{i-1}, Q_i)} \arrow[l, "- \circ f_i"'] 
\end{tikzcd}
    \]
    Since the original diagram was part of an $n$-almost split sequence, the resulting diagram is exact. By assumption, we have that $\pi_i \in \Hom_{\mathcal{P}}(C_i, Q_i) $ is in $\operatorname{Ker}(- \circ f_i)$. Hence by exactness, we have that $\pi_i \in \operatorname{Im}(- \circ f_{i-1})$. This means $\pi = g \circ f_{i-1}$ for some $g$, which contradicts the fact that $f_{i-1}$ is a radical morphism.
    
    Thus, the composition $\pi_i \circ f_i$ is nonzero. Since $Q_i$ is projective, it follows that there exists a projective summand $Q_{i+1}$ of $C_{i+1}$ such that the restriction $(\pi_i \circ f_{i})|_{Q_{i+1}} $ is nonzero. Repeating the same argument for $Q_{i+1}$, we obtain a summand $Q_{i+2}$ of $C_{i+2}$, until we find a projective summand $Q_{n+1}$ of $C_{n+1} = P$, which has to be $Q_{n+1} = P$ since $P$ was assumed to be indecomposable. 
\end{proof}

The following corollary is immediate.

\begin{cor}\label{Cor: Projective summands give cycles}
    Using the same notation, if $P$ appears as a summand in some $C_i$ for $i < n+1$, then there is a cycle in the Gabriel quiver of $A$. 
\end{cor}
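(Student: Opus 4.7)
The plan is to apply the preceding lemma directly with the choice $Q_i = P$ and then translate the resulting chain of radical morphisms between indecomposable projectives into a closed walk in the Gabriel quiver of $A$. This is essentially a packaging argument: all the substantive work has already been carried out in the previous lemma.

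More precisely, by hypothesis $P$ is an indecomposable projective summand of $C_i$ for some $1 \leq i \leq n$, so the preceding lemma applied with $Q_i = P$ produces indecomposable projective summands $Q_j$ of $C_j$ for $i < j \leq n+1$, together with nonzero morphisms $Q_j \to Q_{j-1}$ induced by the maps $f_{j-1}$, and moreover $Q_{n+1} = P$. Writing $v_j$ for the vertex of the Gabriel quiver of $A$ attached to the isomorphism class of the indecomposable projective $Q_j$, we have in particular $v_{n+1} = v_i$.

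The crucial observation is that each induced morphism $Q_j \to Q_{j-1}$ is radical: it is obtained by restricting along an inclusion of a summand and projecting onto a summand, starting from the morphism $f_{j-1}$ of the $n$-almost split sequence, which is itself radical (as already used in the preceding lemma). Under the standard dictionary between radical morphisms between indecomposable projectives and nonzero $k$-linear combinations of paths of positive length in the Gabriel quiver (passing if necessary to the basic algebra Morita equivalent to $A$), each such radical morphism yields at least one path of length $\geq 1$ connecting $v_j$ and $v_{j-1}$. Concatenating the paths obtained for $j = n+1, n, \ldots, i+1$ gives a closed walk of positive length starting and ending at $v_{n+1} = v_i$, and any closed walk of positive length contains a cycle.

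I do not anticipate any real obstacle here, since the conclusion follows almost immediately from the previous lemma once one invokes the well-known correspondence between radical morphisms of indecomposable projectives and paths of positive length in the Gabriel quiver. The only point that requires a moment of care is the non-basic case, which is handled by first passing to the basic algebra Morita equivalent to $A$; this does not affect the Gabriel quiver, nor the indecomposable projectives up to isomorphism.
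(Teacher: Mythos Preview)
Your proof is correct and matches the paper's approach: the paper simply declares the corollary to be immediate from the preceding lemma, and what you have written is precisely the unpacking one would supply if asked to justify that word ``immediate''. The only content beyond the lemma is the standard fact that a nonzero radical morphism between indecomposable projectives forces the existence of a path of positive length between the corresponding vertices of the Gabriel quiver, which you invoke explicitly.
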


We now apply this to show that any morphism $P \to \tau_n^-(P)$ needs to factor in $\mathcal{P}$, unless $A$ has a cycle in its Gabriel quiver. 

\begin{prop}\label{Pro: Factorisation}
    Let $f \colon P  \to \tau_n^-(P)$ be an arbitrary morphism, and consider as before the $n$-almost split sequence 
    \[ 0 \to P \xrightarrow[]{f_n} C_n \xrightarrow[]{f_{n-1}} \cdots \xrightarrow[]{f_1} C_1 \xrightarrow[]{f_0} \tau_n^-(P) \to 0.  \]
    Then the following hold:
    \begin{enumerate}
        \item The morphism $f$ factors through $f_0$.
        \item The module $C_1 = Q_1 \oplus \tau_n^-(Q_1')$ for projective modules $Q_1$ and $Q_1'$.
        \item The module $P$ is not a direct summand of $Q_1'$.
        \item If $P$ is a direct summand of $Q_1$, then the Gabriel quiver of $A$ has a cycle.
    \end{enumerate}
\end{prop}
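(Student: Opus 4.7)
The plan is to prove (1)--(4) in the order given, using only the right minimality of $f_0$, the directedness of $\mathcal{P}$, and, for (3), the collapse of endomorphism rings of indecomposables in $\mathcal{P}$ to $k$ under the basic acyclic hypothesis. Throughout I use that $f_0 \colon C_1 \to \tau_n^-(P)$ is the right $n$-almost split morphism ending at $\tau_n^-(P)$, so in particular it is right minimal and not a retraction.

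For (1), the content is to verify that $f$ is not a retraction so that the right $n$-almost split property of $f_0$ applies. Any section of $f$ would be a nonzero morphism $\tau_n^-(P) \to P$, but by the directedness of $\mathcal{P}$ applied with $j=1$ and $i=0$ we have $\Hom_{\mathcal{P}}(\tau_n^-(A), A) = 0$, so no such section exists. Hence $f$ factors through $f_0$.

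For (2), any indecomposable summand $M$ of $C_1$ lies in $\mathcal{P}$ and is therefore of the form $\tau_n^{-j}(P')$ for some indecomposable projective $P'$ and some $j \geq 0$. Right minimality of $f_0$ forces $f_0|_M \neq 0$, and a nonzero morphism $\tau_n^{-j}(P') \to \tau_n^-(P)$ requires $j \leq 1$ by directedness. Collecting the summands with $j=0$ into a projective $Q_1$ and those with $j=1$ into $\tau_n^-(Q_1')$ for a projective $Q_1'$ produces the claimed decomposition. Part (4) is then an immediate consequence of \Cref{Cor: Projective summands give cycles}: if $P$ is a summand of $Q_1 \subseteq C_1$, then $P$ is a projective summand of $C_1$, and since $1 < n+1$ the Gabriel quiver of $A$ must contain a cycle.

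The remaining claim (3) is the crux. Assume for contradiction that $P$ is a summand of $Q_1'$, so $\tau_n^-(P)$ is a summand of $C_1$ via some inclusion $\iota$. Right minimality of $f_0$ forces the composite $f_0 \iota \in \operatorname{End}_A(\tau_n^-(P))$ to be nonzero, and it cannot be an isomorphism, since otherwise $\iota (f_0 \iota)^{-1}$ would be a section of $f_0$, contradicting that a right almost split morphism is not a retraction. Hence $f_0 \iota$ is a nonzero radical endomorphism of $\tau_n^-(P)$. To derive a contradiction I compute $\operatorname{End}_A(\tau_n^-(P))$: in the $n$-representation infinite setting $\tau_n^-$ is the restriction of an autoequivalence of $D^b(A)$ and $\tau_n^-(P)$ is concentrated in homological degree zero, so $\operatorname{End}_A(\tau_n^-(P)) \simeq \operatorname{End}_{D^b(A)}(P) \simeq e_i A e_i$. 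Since $A$ is basic with acyclic Gabriel quiver, the only path from $i$ to itself is $e_i$, so $e_i A e_i \simeq k$. The radical therefore vanishes, contradicting the existence of $f_0 \iota$. The main obstacle is exactly this endomorphism computation: it combines the derived autoequivalence coming from the $n$-representation infinite hypothesis with the rigidity of indecomposable projectives provided by the basic acyclic assumption, whereas (1), (2) and (4) are essentially formal consequences of the properties of $n$-almost split sequences already recalled.
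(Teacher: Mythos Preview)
Your arguments for (1), (2), and (4) match the paper's (your (1) is in fact more explicit in checking that $f$ is not a retraction, which the paper leaves implicit).

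For (3) your route genuinely differs, and it proves less than the proposition claims. You obtain a nonzero radical endomorphism of $\tau_n^-(P)$ and then kill it by computing $\operatorname{End}_A(\tau_n^-(P))\simeq e_iAe_i\simeq k$ via the \emph{acyclic} hypothesis on $A$. But (3) is stated unconditionally: acyclicity enters the picture only afterwards, in \Cref{Cor: Acyclicity implies nilpotent generators}, and the phrasing of (4) as an implication confirms that the authors are not yet assuming it here. The paper's proof of (3) instead notes that the component of the sink map $f_0$ at the summand $\tau_n^-(P)$ is an \emph{irreducible} morphism in $\mathcal{P}$, transports it along $\tau_n$ to an irreducible endomorphism of $P$ in $\operatorname{proj}(A)\subseteq\mathcal{P}$, and then invokes the no-loop theorem \cite{NoLoops}, which needs only that $A$ has finite global dimension. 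Your argument is a little slicker when acyclicity is already in hand, and it does suffice for the only application that follows, but as written it does not establish (3) in the generality the proposition asserts.
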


\begin{proof}
    \begin{enumerate}
        \item This is immediate because $f_0$ is right almost split. 
        \item By \cite[Lemma 4.25]{HIO14}, the map $f_0$ is a sink map in $\mathcal{P}$. The statement then follows from $f_0$ being right minimal and $\Hom_{\mathcal{P}}(\tau_n^{-j}(A), \tau_n^-(P))=0$ for $j \geq 2$. 
        \item Suppose that $P$ was a summand of $Q_1'$. Then $f_0$ restricted to $\tau_n^-(P)$ gives an irreducible morphism in $\Hom_{\mathcal{P}}(\tau_n^-(P), \tau_n^-(P))$. Applying $\tau_n$ then produces an irreducible morphism $g = \tau_n(f_0|_{\tau_n^-(P)})$ in $\Hom_{\mathcal{P}}(P, P)$. This means that $g$ is irreducible in $\operatorname{proj}(A)$, and so the Gabriel quiver of the finite dimensional algebra $A$ of finite global dimension has a loop, contradicting the no-loop theorem \cite{NoLoops}. 
        \item This follows immediately by applying \Cref{Cor: Projective summands give cycles}.  \qedhere
    \end{enumerate}
\end{proof}

We obtain the claimed nilpotency as a corollary for \emph{acyclic} $n$-representation infinite algebras. 

\begin{cor}\label{Cor: Acyclicity implies nilpotent generators}
    Let $A$ be an acyclic $n$-representation infinite algebra. Then the $A$-bimodule $\Hom_A(A, \tau_n^-(A))$ is generated by a set of elements that are nilpotent in the preprojective algebra. In particular, $\Pi_{n+1}(A)$ is generated as an algebra by $A$ and some nilpotent elements. 
\end{cor}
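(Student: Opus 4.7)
The plan is to decompose the bimodule $\Hom_A(A, \tau_n^-(A))$ according to the indecomposable projective summands and reduce everything, via the factorization results of \Cref{Pro: Factorisation}, to ``off-diagonal'' morphisms, which were already noted to square to zero in the preprojective algebra. More precisely, I would begin by writing
\[ \Hom_A(A, \tau_n^-(A)) = \bigoplus_{i,j}\Hom_A(P_i, \tau_n^-(P_j)). \]
For $i\neq j$, the discussion preceding \Cref{Pro: Factorisation} shows that every $\phi \in \Hom_A(P_i,\tau_n^-(P_j))$ satisfies $\phi^2 = 0$ in $\Pi_{n+1}(A)$, so such elements are automatically nilpotent. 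Thus the content of the statement is to handle the diagonal terms $\Hom_A(P,\tau_n^-(P))$ where $P$ is indecomposable projective.

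Next, I would fix an indecomposable projective $P$ and an arbitrary $f \in \Hom_A(P,\tau_n^-(P))$. By \Cref{Pro: Factorisation}(1), $f$ factors through the sink map $f_0 \colon C_1 \to \tau_n^-(P)$ of the $n$-almost split sequence, so $f = f_0 \circ g$ for some $g\colon P \to C_1$. Using \Cref{Pro: Factorisation}(2), write $C_1 = Q_1 \oplus \tau_n^-(Q_1')$ with $Q_1, Q_1'$ projective, and split $g = (g_1,g_2)$ accordingly. By \Cref{Pro: Factorisation}(3) the projective $Q_1'$ does not contain $P$ as a summand; since $A$ is assumed acyclic, \Cref{Pro: Factorisation}(4) forbids $P$ from being a summand of $Q_1$ either. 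This gives the decomposition
\[ f = f_0|_{Q_1}\circ g_1 \;+\; f_0|_{\tau_n^-(Q_1')}\circ g_2. \]

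The main point is now to translate each summand into the bimodule action. In the first summand, $g_1 \in \Hom_A(P,Q_1) \subseteq A$ has preprojective degree $0$, while $f_0|_{Q_1}$, being a morphism from a projective without $P$-summand to $\tau_n^-(P)$, is a sum of off-diagonal bimodule elements and hence squares to zero. For the second summand, $g_2 \in \Hom_A(P,\tau_n^-(Q_1'))$ is itself a sum of off-diagonal bimodule elements (since $Q_1'$ has no $P$-summand) and therefore squares to zero, while $f_0|_{\tau_n^-(Q_1')}$ corresponds under the equivalence $\tau_n^-\colon \mathcal P \to \mathcal P$ to an element of $A$. In both cases, the composition in $\tau_n^-\text{-land}$ realises a bimodule action in $\Pi_{n+1}(A)$: degree $0$ times an off-diagonal degree $1$ element. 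So $f$ lies in the $A$-subbimodule of $\Hom_A(A,\tau_n^-(A))$ generated by the collection of off-diagonal elements $\bigcup_{i\neq j}\Hom_A(P_i,\tau_n^-(P_j))$, every member of which is nilpotent (squares to zero) in $\Pi_{n+1}(A)$.

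Combined with the trivial observation that off-diagonal elements themselves already lie in the bimodule, this proves that $\Hom_A(A,\tau_n^-(A))$ is generated as an $A$-bimodule by nilpotent elements. The ``in particular'' clause then follows immediately: $\Pi_{n+1}(A) = \operatorname{T}_A\Ext^n_A(D(A),A)$ is generated as an algebra by $A$ and the degree $1$ bimodule, which is in turn generated by the above nilpotent elements. The main delicate step I anticipate is making precise the translation between composition of morphisms in the module category and the bimodule multiplication inside the preprojective algebra, i.e.\ verifying that $f_0|_{\tau_n^-(Q_1')}\circ g_2$ really is the bimodule product of $g_2$ by an element of $A$ and not something intrinsically of higher preprojective degree; this is really a bookkeeping issue about how $\tau_n^-$ interacts with the identification $\Ext^n_A(D(A),A) \simeq \Hom_A(A,\tau_n^-(A))$, but deserves to be spelled out carefully.
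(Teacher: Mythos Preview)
Your proposal is correct and follows essentially the same line as the paper: decompose $\Hom_A(A,\tau_n^-(A))$ into off-diagonal pieces (which square to zero) and use \Cref{Pro: Factorisation} together with acyclicity to write every diagonal morphism as an $A$-bimodule combination of off-diagonal ones. You are in fact more explicit than the paper about the translation between composition in $\mathcal P$ and the bimodule multiplication in $\Pi_{n+1}(A)$; the paper simply asserts that diagonal elements are generated by the previously chosen off-diagonal basis without spelling out this bookkeeping step.
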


\begin{proof}
    We generate $\Hom_A(A, \tau_n^-(A))$ by generating $\Hom_A(P_i, \tau_n^-(P_j))$ for every pair $(i,j)$. If $i \neq j$, all elements in $\Hom_A(P_i, \tau_n^-(P_j))$ square to $0$, and so we pick any basis of $\Hom_A(P_i, \tau_n^-(P_j))$ and add it to our generating set. If $i=j$, then every element in $\Hom_A(P_i, \tau_n^-(P_i))$ factors through $Q_1 \oplus \tau_n^-(Q_1)$ where $Q_1$ and $Q_1'$ are projective modules without direct summands isomorphic to $P_i$. Hence, every element in $\Hom_A(P_i, \tau_n^-(P_i))$ is generated by the previously chosen basis elements.  
\end{proof}

\subsection{Koszulity of \texorpdfstring{$n$}{n}-representation infinite algebras}
In this section, we assume that $A = {}_0 R$ is $n$-representation infinite and that the graded simples of ${}_{\bullet}R$ and $R_{\bullet}$ are isomorphic as ungraded modules. As before, we assume that both gradings are locally finite dimensional. Then we aim to show that $A$ is Koszul. We need the following notion of minimality for graded projective resolutions, which can be found in \cite[Section 2]{MM11}. 

\begin{remark}\label{Rem: Minimal graded resolutions}
    Let ${}_\bullet R$ be nonnegatively graded and locally finite dimensional, and let $M$ be a graded module whose support is bounded below. Then one can construct a resolution 
    $$\cdots \to P^i \xrightarrow[]{d_i} P^{i+1} \to \cdots \to P^0$$ 
    of $M$ by graded projective ${}_\bullet R$-modules $P^i$. This resolution is called minimal if $\operatorname{Im}(d_i) \subseteq \grad(P^{i+1})$. Taking the degree $0$ part of this resolutions then yields a minimal projective resolution of the ${}_0 R$-module ${}_0 M$ by projective ${}_0 R$-modules.  
\end{remark}

The following lemma is a consequence of the fact that the gradings we consider are locally finite. The proof is almost verbatim that of \cite[Proposition 3.1.2]{Madsen}. 

\begin{lem}\label{Lem: Ext decomposes preprojectively}
    Let $R_\bullet$ be a locally finite Koszul grading and ${}_\bullet R$ be a higher preprojective grading such that $A = {}_0 R$ is $n$-representation infinite, and write $S = \topp(A)$. If $S \simeq R_0 $ as ungraded $R$-modules, then 
    \[  \Ext^i_R(S, S) = \bigoplus_{j \in \mathbb{Z}} \Ext^i_{ {}_\bullet R }(S, S\langle j \rangle ).  \]
\end{lem}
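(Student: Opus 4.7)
The plan is to compute $\Ext^i_R(S,S)$ using two different projective resolutions of $S$: a minimal graded projective resolution with respect to the preprojective grading ${}_\bullet R$ (to produce the desired decomposition), and the Koszul resolution with respect to $R_\bullet$ (to force the necessary finiteness).

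First I would take a minimal graded projective resolution $P^\bullet \to S$ of $S$ as a graded ${}_\bullet R$-module concentrated in degree $0$, and write $P^i = \bigoplus_{\alpha} ({}_\bullet R)\, e_\alpha \langle d_\alpha^i \rangle$ with $e_\alpha$ primitive idempotents of $A = {}_0 R$. Local finiteness guarantees that $\{\alpha : d_\alpha^i = j\}$ is finite for each $j$. By minimality each differential lies in $\grad({}_\bullet R)\cdot P^{i-1}$, and since $\grad({}_\bullet R) = \rad(A) \oplus R_+$ annihilates $S$ on either side, the induced differentials on $\Hom_R(P^\bullet,S)$ and on $S \otimes_R P^\bullet$ vanish. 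Hence
\[
\Ext^i_R(S,S) = \Hom_R(P^i,S) = \prod_{\alpha} e_\alpha S, \qquad \Ext^i_{{}_\bullet R}(S,S\langle j\rangle) = \bigoplus_{\alpha\,:\, d_\alpha^i = j} e_\alpha S,
\]
and summing the right-hand identity over $j$ yields $\bigoplus_{\alpha} e_\alpha S$. Thus the lemma reduces to showing that the total index set $\{\alpha\}$ is finite, since only then does the product coincide with the direct sum.

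Second, to force this finiteness, I would invoke the Koszul grading. By local finiteness of $R_\bullet$, the Koszul resolution $K^\bullet \to R_0$ consists of graded projectives that are finitely generated in Koszul degree $i$, so each $K^i$ is a finitely generated projective $R$-module. The hypothesis $S \simeq R_0$ as ungraded $R$-modules then ensures that $K^\bullet$ is an ungraded projective resolution of $S$, so $\operatorname{Tor}^R_i(S,S)$ is finite dimensional as a subquotient of $S \otimes_R K^i$. Recomputing this $\operatorname{Tor}$ via $P^\bullet$ and using the same vanishing yields $\operatorname{Tor}^R_i(S,S) = S \otimes_R P^i = \bigoplus_{\alpha} Se_\alpha$, where each $Se_\alpha$ is nonzero because $e_\alpha$ is primitive in $A = \rad(A) \oplus (\text{semisimple})$. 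Finite dimensionality of this direct sum forces the index set to be finite, whence $P^i$ is finitely generated and $\prod_{\alpha} e_\alpha S = \bigoplus_{\alpha} e_\alpha S$.

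The main subtlety is the distinction between product and direct sum: for a graded module with infinitely many generators, ungraded $\Hom$ into the semisimple module $S$ would produce an infinite product while the direct sum of graded $\Hom$s produces only a direct sum. The Koszul grading is indispensable precisely because it provides a resolution by finitely generated projectives, bounding $\dim_k \operatorname{Tor}^R_i(S,S)$; beyond this, the two gradings interact only through the identification $S \simeq R_0$ that translates the Koszul resolution into a resolution of $S$.
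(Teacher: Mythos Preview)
Your argument is correct and follows the same overall skeleton as the paper's proof: first identify $\Ext^i_R(S,S)$ with the \emph{product} $\prod_{j} \Ext^i_{{}_\bullet R}(S,S\langle j\rangle)$, then use the Koszul grading to force finite dimensionality so that the product collapses to a direct sum. The difference lies in how each argument establishes finiteness.

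The paper obtains the product decomposition by citing a result of Madsen and then argues directly on the $\Ext$ side: since $R_\bullet$ is Koszul, the Yoneda algebra $\Ext^*_R(S,S)$ is generated in cohomological degrees $0$ and $1$, and these two pieces are finite dimensional by local finiteness of $R_\bullet$; hence every $\Ext^i_R(S,S)$ is finite dimensional and the product is finite. Your route is more hands-on: you build the product decomposition explicitly from a minimal ${}_\bullet R$-graded resolution $P^\bullet$, and then switch to $\operatorname{Tor}$ to bound the number of generators of each $P^i$, exploiting the fact that tensor products commute with direct sums (so no product/sum discrepancy arises on the $\operatorname{Tor}$ side) together with finite generation of the Koszul resolution. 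This buys you a self-contained argument that avoids both the Madsen reference and the algebra structure on $\Ext^*$, at the cost of juggling two resolutions and passing through $\operatorname{Tor}$. The paper's version is shorter and more conceptual, but yours makes the mechanism behind the finiteness completely explicit.
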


\begin{proof}
    Since $S$ is concentrated in degree $0$, it follows from the proof of \cite[Proposition 3.1.2]{Madsen} that 
    \[ \Ext_R^i(S,S) \simeq \prod_{j \in \mathbb{Z}} \Ext^i_{{}_\bullet R}(S, S \langle j \rangle) \]
    for each $i \geq 0$. Since $R_\bullet$ is Koszul, we have that $\Ext^0_R(S,S) \simeq D(R_0)$ and $\Ext^1_R(S,S) \simeq D(R_1)$, which both are finite dimensional by assumption. Furthermore, since $R_\bullet$ is Koszul, we have that $\Ext^\ast_R(S,S)$ is generated in cohomological degrees $0$ and $1$, so it follows that $\Ext_R^i(S,S)$ is finite dimensional for all $i \geq 0$. Therefore, each direct product $\prod_{j \in \mathbb{Z}} \Ext^i_{{}_\bullet R}(S, S \langle j \rangle)$ is a finite direct product, and hence we obtain  
    \[ \Ext_R^i(S,S) \simeq \prod_{j \in \mathbb{Z}} \Ext^i_{{}_\bullet R}(S, S \langle j \rangle) \simeq \bigoplus_{j \in \mathbb{Z}} \Ext^i_{{}_\bullet R}(S, S \langle j \rangle). \qedhere \]
\end{proof}

\begin{prop}\label{Prop: Degree 0 is Koszul}
Let $R_\bullet$ be a locally finite Koszul grading and ${}_\bullet R$ be a higher preprojective grading such that $A = {}_0 R$ is $n$-representation infinite. Suppose that the graded simple modules of $R_\bullet$ and ${}_\bullet R$ coincide as ungraded modules. Then $A$ is a Koszul algebra. 
\end{prop}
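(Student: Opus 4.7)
The plan is to verify the hypothesis of \Cref{Cor: FD algebera Koszul iff Yoneda is gen in 0 and 1} by showing that $\Ext^*_A(S,S)$ is generated in cohomological degrees $0$ and $1$, mirroring the strategy of \Cref{Pro: n-RF algebra inherits Koszul} from the $n$-representation finite case. The central idea is to promote $E := \Ext^*_R(S,S)$ to a bigraded algebra via the higher preprojective grading and then restrict attention to its internal-degree-$0$ part.

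First I would invoke \Cref{Lem: Ext decomposes preprojectively} to write $\Ext^i_R(S,S) = \bigoplus_{j \in \mathbb{Z}} \Ext^i_{{}_\bullet R}(S, S\langle j\rangle)$, which together with the Yoneda product makes $E$ into a bigraded $k$-algebra with cohomological degree $i$ and internal (preprojective) degree $j$. Since $S$ is concentrated in preprojective degree $0$ and ${}_\bullet R$ is nonnegatively graded, any minimal graded projective resolution $P^\bullet \to S$ has all generators placed in degrees $\geq 0$, so the internal grading on $E$ is nonnegative. Moreover, by \Cref{Rem: Minimal graded resolutions} the degree-$0$ part $(P^\bullet)_0$ is a minimal projective $A$-resolution of $S$, and since $\Hom_{{}_\bullet R}(P^i, S\langle 0\rangle) = \Hom_A((P^i)_0, S)$ term by term, the internal-degree-$0$ subalgebra of $E$ is identified with $\Ext^*_A(S,S)$ as an algebra under Yoneda.

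Now, since $R_\bullet$ is Koszul, \Cref{Prop: Graded algebra Koszul iff Yoneda is generated in 0 and 1} tells us that $E$ is generated as a $k$-algebra in cohomological degrees $0$ and $1$. Given $\alpha \in \Ext^i_A(S,S)$ with $i \geq 2$, viewed as an element of $E$ of internal degree $0$, I would write $\alpha$ as a sum of $i$-fold Yoneda products of elements of cohomological degree $1$ in $E$, decompose each factor into internal-homogeneous components, and observe that only summands whose factor degrees sum to $0$ survive in internal degree $0$; by nonnegativity, this forces every factor to lie in internal degree $0$, hence in $\Ext^1_{{}_\bullet R}(S,S) = \Ext^1_A(S,S)$. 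Therefore $\Ext^*_A(S,S)$ is generated in cohomological degrees $0$ and $1$, and \Cref{Cor: FD algebera Koszul iff Yoneda is gen in 0 and 1} produces a Koszul grading on $A$. The main technical point is the clean identification of the internal-degree-$0$ subalgebra of $E$ with $\Ext^*_A(S,S)$, which relies on the hypothesis that the graded simples of $R_\bullet$ and ${}_\bullet R$ agree as ungraded modules: this makes $S$ genuinely concentrated in degree $0$ for both gradings, so the same minimal ${}_\bullet R$-resolution computes both Ext-algebras once the internal grading is taken into account.
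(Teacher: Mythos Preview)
Your proposal is correct and follows essentially the same route as the paper's proof: both invoke \Cref{Lem: Ext decomposes preprojectively} to bigrade $\Ext^*_R(S,S)$, use nonnegativity of the internal (preprojective) degree together with Koszulity of $R_\bullet$ to see that the internal-degree-$0$ subalgebra is generated in cohomological degrees $0$ and $1$, and then identify this subalgebra with $\Ext^*_A(S,S)$ via the degree-$0$ part of a minimal graded projective resolution before applying \Cref{Cor: FD algebera Koszul iff Yoneda is gen in 0 and 1}. The only cosmetic difference is that the paper phrases the generation argument by contradiction (a hypothetical higher-degree generator would require factors of negative internal degree), whereas you argue it directly.
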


\begin{proof}
Let $S = \topp(A)$. Hence, by assumption, we have $S \simeq R_0$ as $R$-modules. Since $R_\bullet$ is Koszul, we deduce from \Cref{Prop: Graded algebra Koszul iff Yoneda is generated in 0 and 1} that 
\[ \Ext^{*}_{R}(S,S) = \bigoplus_{i \geq 0}\Ext^{i}_{R}(S,S) \]
is generated in cohomological degree $0$ and $1$. It follows from \Cref{Lem: Ext decomposes preprojectively} that
\[ \Ext^{i}_{R}(S,S) \simeq \bigoplus_{j \in \mathbb{Z}}\Ext^{i}_{ {}_{\bullet}R}(S,S\langle j \rangle). \]  
It is straightforward to check that 
\[ ({}_{0}R)^! = \bigoplus_{i \in \mathbb{Z}}\Ext^{i}_{ {}_{\bullet}R}(S,S) \]
is a subalgebra of $R^! = \Ext^{*}_{R}(S,S)$, and we claim that it is also generated in cohomological degree $0$ and $1$. Assume to the contrary that it has a generator $x$ in cohomological degree $> 1$. Since $x$ is also an element in $R^!$, it can be expressed as a sum of products of generators of $R^!$ that lie in degree $0$ and $1$.
However, as ${}_{\bullet}R$ is positively graded,  we have that $R^!$ has no generators lying in some
\[ \Ext^{i}_{{}_{\bullet}R}(S,S\langle j \rangle) \]
for $j < 0$, which would be necessary if ${}_{0}R^!$ were to have a generator in cohomological degree $> 1$, and hence there are no such generators $x$. 

Now, we consider a minimal graded projective resolution $P^{\bullet}$ of $S$ over ${}_{\bullet}R$ as in \Cref{Rem: Minimal graded resolutions}. Then its degree $0$ part is a minimal projective resolution of $S$ over $A$.
Observe that $\grad({}_\bullet R)$ annihilates $S$, and that thus we can write 
\begin{align*}
\Ext^{i}_{{}_{\bullet}R}(S,S) 
& \simeq \Hom_{{}_{\bullet}R}(P^{i},S)\\
& \simeq \Hom_{A}({}_{0}P^{i},S)\\
& \simeq \Ext^{i}_{A}(S,S),
\end{align*}
from which we deduce that $\Ext^{*}_{A}(S,S)$ is generated in degree $0$ and $1$, and hence $A$ is Koszul by \Cref{Cor: FD algebera Koszul iff Yoneda is gen in 0 and 1}. 
\end{proof}

\subsection{Preprojective gradings and cuts}
We are now ready to state our main theorem for the $n$-representation infinite case. In light of \Cref{SSec: Nilpotency of Generators}, we fix the following terminology. 

\begin{defin}
    A higher preprojective grading ${}_\bullet R$ on some algebra $R$ is called \emph{acyclic} if the finite-dimensional algebra ${}_0 R$ has acyclic Gabriel quiver.
\end{defin}

\begin{thm}\label{Thm: Main theorem for basic nRI}
    Let $R = R_\bullet$ be a graded basic Koszul algebra, so that $R_1$ is generated by nilpotent elements. If there exists an acyclic higher preprojective grading ${}_\bullet R$ on $R$, then there exists an automorphism that maps this higher preprojective grading to a higher preprojective cut. In particular, all acyclic higher preprojective gradings are, up to isomorphism, higher preprojective cuts.  
\end{thm}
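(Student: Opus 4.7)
The strategy is to deduce that $A = {}_0 R$ must itself be Koszul and then transport a cut produced by the Grant--Iyama construction across the uniqueness isomorphism for Koszul gradings. The first step is to show that the two gradings share the same graded simples as ungraded modules. Acyclicity of ${}_\bullet R$ gives, via \Cref{Cor: Acyclicity implies nilpotent generators}, that ${}_1 R$ is generated over $A$ by elements nilpotent in $R$; together with the hypothesis that $R_1$ is generated over $R_0$ by nilpotent elements, \Cref{Cor: Nilpotent generators imply equal gradicals} forces $\grad(R_\bullet) = \grad({}_\bullet R)$, and then \Cref{Prop: Graded simples coincide if graded radicals do} yields the agreement of graded simples. \Cref{Prop: Degree 0 is Koszul} now applies and shows that $A$ admits a Koszul grading. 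Since $R_0 \simeq A/\rad(A)$ is basic by hypothesis, $A$ itself is basic.

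The next step is to feed $A$ into the Grant--Iyama construction: by \Cref{Thm: GI construction}(2) the $(n+1)$-total grading endows $R \simeq \Pi_{n+1}(A)$ with a second locally finite, graded basic Koszul grading $R'_\bullet$, and by \Cref{Prop: GI grading defines a cut} the preprojective grading ${}_\bullet R$ is a cut for $R'_\bullet$. Both $R_\bullet$ and $R'_\bullet$ are now graded basic, locally finite Koszul gradings on the same ring $R$, so \Cref{Cor: Koszul grading is unique} supplies an isomorphism of graded algebras $\varphi \colon (R, R'_\bullet) \xrightarrow{\sim} (R, R_\bullet)$. Forgetting the grading, $\varphi$ is an automorphism of $R$ with $\varphi(R'_i) = R_i$ for every $i$. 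Transporting ${}_\bullet R$ along $\varphi$ produces a new higher preprojective grading on $R$; its cut conditions for $R_\bullet$ are exactly the images under $\varphi$ of the corresponding conditions for $(R'_\bullet, {}_\bullet R)$, and hence they hold. The ``in particular'' clause follows since $\varphi$ is an algebra automorphism.

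The only genuinely delicate step is the one forcing the graded simples to coincide: in the $n$-representation infinite case it is a priori possible for $R_0$ and $A$ to be incomparable, so the graded simples of the two gradings might differ. The nilpotency hypothesis on the generators of $R_1$, together with acyclicity of ${}_\bullet R$, is exactly what rules this out, via \Cref{Cor: Nilpotent generators imply equal gradicals}; after that, the remainder is a formality handed to us by Koszul uniqueness.
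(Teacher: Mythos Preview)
Your proof is correct and follows the same route as the paper's own argument: use \Cref{Cor: Acyclicity implies nilpotent generators} and \Cref{Cor: Nilpotent generators imply equal gradicals} to match the graded radicals, invoke \Cref{Prop: Degree 0 is Koszul} to make $A$ Koszul, apply the Grant--Iyama construction to obtain a second Koszul grading for which ${}_\bullet R$ is a cut, and then transport along the graded isomorphism supplied by \Cref{Cor: Koszul grading is unique}. You in fact spell out two points the paper leaves implicit---the intermediate step through \Cref{Prop: Graded simples coincide if graded radicals do}, and the observation that $A/\rad(A)\simeq R_0$ is basic so that \Cref{Cor: Koszul grading is unique} applies to the Grant--Iyama grading as well---both of which are correct and useful.
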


\begin{proof}
    Combining \Cref{Cor: Acyclicity implies nilpotent generators} and \Cref{Cor: Nilpotent generators imply equal gradicals}, we obtain that the graded simples for the Koszul grading and the higher preprojective grading coincide. Therefore, we can apply \Cref{Prop: Degree 0 is Koszul} to see that $A = {}_0 R$ is Koszul. Then, by \Cref{Prop: GI grading defines a cut}, there exists a Koszul grading on $\Pi_{n+1}(A) \simeq R$ so that the higher preprojective grading on $\Pi_{n+1}(A)$ is a cut. Finally, by \Cref{Cor: Koszul grading is unique}, there exists an automorphism taking the new Koszul grading on $\Pi_{n+1}(A) \simeq R$ to the given grading $R_\bullet$. Hence, this automorphism maps ${}_\bullet R$ to a cut.  
\end{proof}

We also have the following non-basic version, assuming that we can work over a fixed semisimple base ring. 

\begin{thm}\label{Thm: Main theorem for nonbasic nRI}
    Let $R = R_\bullet$ be a Koszul algebra, so that $R_1$ is generated by nilpotent elements. If there exists an acyclic higher preprojective grading ${}_\bullet R$ on $R$ such that $R_0 \subseteq {}_0 R$, then there exists a higher preprojective cut realising $R_\bullet = \Pi_{n+1}(A)$ so that $A \simeq {}_0 R$. 
\end{thm}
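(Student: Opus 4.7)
The plan is to Morita-reduce to the graded basic setting, adapt the argument of \Cref{Thm: Main theorem for basic nRI}, and then lift the resulting cut via \Cref{Prop: HPC pulls back along Morita}. First I would pick a Morita idempotent $e \in R_0$ so that $R^b_0 := eR_0 e$ is basic. The hypothesis $R_0 \subseteq {}_0 R$ ensures $e \in {}_0 R$, so $e$ is homogeneous of degree $0$ with respect to both gradings, and $R^b = eRe$ inherits a Koszul grading $R^b_\bullet$ and a higher preprojective grading ${}_\bullet R^b$ with ${}_0 R^b = eAe$.

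The key technical step, and the place where the hypothesis $R_0 \subseteq {}_0 R$ is indispensable, is to show that $eAe$ itself is basic. Since $R_0$ is semisimple, $R_0 \cap \rad(A) = 0$, so the injection $R_0 \hookrightarrow A$ composes with $A \twoheadrightarrow \topp(A)$ to an injective algebra map whose codomain has the same dimension as $R_0$ by the proposition giving $\topp(R_0) \simeq \topp(A)$; an injection between finite-dimensional algebras of equal dimension is an isomorphism, so $A = R_0 \oplus \rad(A)$ is a Wedderburn--Malcev decomposition. Under this identification $e$ corresponds to an idempotent $\bar{e} \in \topp(A)$ with $\bar{e}\,\topp(A)\,\bar{e} \simeq eR_0 e$ basic, and since $eAe \cap \rad(A) = e\rad(A)e = \rad(eAe)$, we have $\topp(eAe) \simeq \bar{e}\,\topp(A)\,\bar{e}$, which forces $eAe$ to be basic as well.

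With both $R^b_0$ and ${}_0 R^b = eAe$ basic, the argument of the basic case goes through on $R^b$. The idempotent $e$ remains full in $A$ (since $1 \in R_0 e R_0 \subseteq AeA$), so $eAe$ is Morita equivalent to $A$ and hence remains $n$-representation infinite with acyclic Gabriel quiver. \Cref{Cor: Acyclicity implies nilpotent generators} yields that ${}_1 R^b$ is generated by nilpotent elements, and \Cref{Cor: Nilpotent generators imply equal gradicals} then gives $\grad(R^b_\bullet) \supseteq \grad({}_\bullet R^b)$. The reverse inclusion is automatic: the induced surjection $\topp({}_0 R^b) \twoheadrightarrow R^b_0$ is a surjection between finite-dimensional semisimple algebras of the same dimension (the $\topp$-isomorphism applied to $R^b$), hence an isomorphism. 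Thus the graded simple modules of the two gradings on $R^b$ coincide as ungraded modules by \Cref{Prop: Graded simples coincide if graded radicals do}, and \Cref{Prop: Degree 0 is Koszul} yields that $eAe$ is Koszul.

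Finally, the Grant--Iyama construction (\Cref{Prop: GI grading defines a cut}) produces a Koszul grading on $\Pi_{n+1}(eAe) \simeq R^b$ for which the higher preprojective grading is a cut; by \Cref{Cor: Koszul grading is unique} this Koszul grading is graded-isomorphic to $R^b_\bullet$, so the cut transports to a higher preprojective cut on $R^b_\bullet$ with degree zero part isomorphic to $eAe$. Applying \Cref{Prop: HPC pulls back along Morita} lifts this cut to a higher preprojective cut on $R_\bullet$ whose degree zero part is the Morita lift of $eAe$ along $e$ with the multiplicities coming from $R_0$, and is therefore isomorphic to $A \simeq {}_0 R$. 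The hardest step conceptually is the basicness of $eAe$; once that is established the remaining arguments are formal consequences of the techniques of the basic case, with the observation on equal-dimensional semisimple quotients replacing the need for any nilpotent-generator hypothesis on $R^b_1$.
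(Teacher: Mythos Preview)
Your proof is correct and follows the same reduction strategy as the paper: choose a Morita idempotent $e\in R_0\subseteq {}_0R$, work on $R^b=eRe$, and lift back via \Cref{Prop: HPC pulls back along Morita}. You make explicit two verifications the paper leaves to the reader, namely that $eAe$ is basic (so that \Cref{Cor: Nilpotent generators imply equal gradicals} and \Cref{Cor: Koszul grading is unique} apply) and that $e$ is full in $A$ (so that $eAe$ remains $n$-representation infinite with acyclic Gabriel quiver). Your Wedderburn--Malcev argument for the basicness of $eAe$ is exactly where the hypothesis $R_0\subseteq{}_0R$ does its real work.

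The one substantive deviation is in how you reach $\grad(R^b_\bullet)=\grad({}_\bullet R^b)$. The paper simply invokes \Cref{Thm: Main theorem for basic nRI}, whose proof uses both halves of \Cref{Cor: Nilpotent generators imply equal gradicals} and hence needs $R^b_1=eR_1e$ to be generated by nilpotent elements; this does not obviously descend from the hypothesis on $R_1$ (e.g.\ in $M_2(k[x])$ the nilpotent $E_{12}x$ generates $R_1$, yet $E_{11}R_1E_{11}=k\,E_{11}x$ has no nilpotent generator). You instead obtain only the inclusion $\grad({}_\bullet R^b)\subseteq\grad(R^b_\bullet)$ from acyclicity and then force equality by a dimension count, using that the resulting surjection $\topp({}_0R^b)\twoheadrightarrow R^b_0$ is between semisimple algebras of the same dimension by the proposition on tops of degree-zero parts. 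This is a clean improvement: your argument never uses the nilpotency hypothesis on $R_1$, so in fact it shows that this hypothesis is superfluous in the statement of \Cref{Thm: Main theorem for nonbasic nRI}.
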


\begin{proof}
    By assumption, we can reduce $R_\bullet$ to a graded basic algebra $R_\bullet^b$ by some homogeneous idempotent $e \in R_0 \subseteq {}_0 R$. Since $e$ is preprojectively homogeneous, it follows that $eRe$ inherits a higher preprojective grading. By \Cref{Thm: Main theorem for basic nRI}, we obtain a higher preprojective cut, and by \Cref{Prop: HPC pulls back along Morita} this gives the higher preprojective cut on $R_\bullet$.
\end{proof}

We conclude with an application showing that $n$-APR tilting preserves being Koszul. For the definition of $n$-APR tilting, we refer the reader to \cite{Iyama-Oppermann}. 

\begin{cor}
    Let $A$ be basic and $n$-representation infinite, and let $B$ be an $n$-APR tilt of $A$. Then $A$ is Koszul if and only if $B$ is Koszul.
\end{cor}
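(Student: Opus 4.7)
The key input is the standard description of $n$-APR tilting due to Iyama--Oppermann: the underlying ungraded higher preprojective algebra is unchanged by such a tilt; only the cut is modified. Setting $R := \Pi_{n+1}(A)$, one has an ungraded isomorphism $R \cong \Pi_{n+1}(B)$ that identifies $A$ and $B$ with the degree $0$ pieces of two different higher preprojective gradings on the same ring $R$.

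It is enough to prove the forward implication, since $A$ is itself an $n$-APR tilt of $B$ and the converse is symmetric. Assume $A$ is Koszul. By \Cref{Thm: GI construction}(2), the $(n+1)$-total grading equips $R$ with a Koszul grading $R_\bullet$, in which the preprojective grading coming from $A$ is a cut. Transporting $R_\bullet$ across the ungraded isomorphism $R \cong \Pi_{n+1}(B)$ now yields, on a single ring $R$, both a Koszul grading $R_\bullet$ and a higher preprojective grading ${}_\bullet R$ with ${}_0 R \cong B$.

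The plan is to apply \Cref{Prop: Degree 0 is Koszul} to the pair $(R_\bullet, {}_\bullet R)$ in order to conclude that $B$ is Koszul. This requires that the graded simple modules of $R_\bullet$ and ${}_\bullet R$ coincide as ungraded modules. By \Cref{Prop: Graded simples coincide if graded radicals do} it suffices to show that $\grad(R_\bullet) = \grad({}_\bullet R)$; \Cref{Cor: Nilpotent generators imply equal gradicals} reduces this further to the statement that ${}_1R$ is generated over $B = {}_0R$ by nilpotent elements, which in turn follows from \Cref{Cor: Acyclicity implies nilpotent generators} the moment we know that the Gabriel quiver of $B$ is acyclic.

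The main obstacle is therefore verifying that acyclicity of the Gabriel quiver is preserved by $n$-APR tilting in the basic $n$-representation infinite setting: granted acyclicity of $A$ (available in the Koszul, basic, $n$-representation infinite situation from the standard setup of $n$-APR tilts), $B$ is obtained from $A$ by a controlled change of cut at a source vertex, and one reads off directly from the Iyama--Oppermann construction that acyclicity is preserved. With acyclicity of $B$ in hand, the chain of reductions above yields $B$ Koszul, completing the proof.
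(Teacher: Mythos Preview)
Your overall strategy matches the paper's: obtain a Koszul grading on $R=\Pi_{n+1}(A)\cong\Pi_{n+1}(B)$ via \Cref{Thm: GI construction}, then apply \Cref{Prop: Degree 0 is Koszul} to the $B$-preprojective grading. The difference lies in how you verify the hypothesis of that proposition (coincidence of graded simples), and your route has a gap.

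The paper's argument is shorter and avoids acyclicity altogether. Since the $A$-preprojective grading is a cut for the Koszul grading $R_\bullet$, and since $n$-APR tilting in the Iyama--Oppermann description is literally a change of cut on the same quiver $\overline{Q}$, the $B$-preprojective grading is \emph{also} a cut for $R_\bullet$. For any cut one reads off $\grad({}_\bullet R)=R_+=\grad(R_\bullet)$ directly from the induced $\mathbb{Z}^2$-grading, so the graded simples coincide without invoking nilpotent generators.

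Your detour through \Cref{Cor: Nilpotent generators imply equal gradicals} and \Cref{Cor: Acyclicity implies nilpotent generators} is incomplete in two places. First, \Cref{Cor: Nilpotent generators imply equal gradicals} requires \emph{both} hypotheses to conclude equality of graded radicals: nilpotent generation of ${}_1R$ over $B$ yields only $\grad(R_\bullet)\supseteq\grad({}_\bullet R)$, while the reverse inclusion needs $R_1$ to be generated over $R_0$ by nilpotent elements --- a condition you never address, and which appears as a genuine hypothesis in \Cref{Thm: Main theorem for basic nRI} rather than a consequence of the setup. Second, your claim that $n$-APR tilting preserves acyclicity is asserted rather than argued; once you examine the Iyama--Oppermann construction closely enough to justify it, you have already seen that the tilt is again a cut, making the acyclicity detour redundant.
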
 

\begin{proof}
    If $A$ is Koszul, then so is $\Pi_{n+1}(A)$ by \Cref{Thm: GI construction}, and the higher preprojective grading is a cut for this grading. Hence, the graded simple modules for the two gradings coincide as ungraded modules. Transferring the grading to $\Pi_{n+1}(B)$, we get a Koszul grading on $\Pi_{n+1}(B)$ satisfying the assumptions of \Cref{Prop: Degree 0 is Koszul}, showing that $B$ is Koszul. 
\end{proof}

\section*{Acknowledgements}
We thank Gustavo Jasso for pointing out an error in a previous version of this article. We thank Steffen Oppermann for helpful discussions, and for suggesting the proof of a Lemma which was used in a previous version of this article. We also thank Martin Herschend for many helpful discussions. The second author is grateful to have been supported by Norwegian Research Council project 301375, ``Applications of reduction techniques and computations in representation theory''.

\bibliographystyle{alpha}

\bibliography{big_awful_bib.bib}
\end{document}